\newtheorem{theorem}{Theorem}[section]
\newtheorem{corollary}[theorem]{Corollary}
\newtheorem{lemma}[theorem]{Lemma}
\newtheorem{proposition}[theorem]{Proposition}
\newtheorem{mainthm}{Theorem}
\newtheorem{maincor}[mainthm]{Corollary}
\theoremstyle{definition}
\newtheorem{definition}[theorem]{Definition}
\newtheorem{remark}[theorem]{Remark}
\renewcommand{\S}{{\mathcal S}}
\newcommand{\add}{\mathsf{add}\hspace{.01in}}
\renewcommand{\mod}{\mathsf{mod}\hspace{.01in}}
\newcommand{\smod}{\underline{\mod}}
\newcommand{\arr}{\operatorname{1}\nolimits}
\newcommand{\cHom}{\overline{\Hom}}
\newcommand{\Ext}{\operatorname{Ext}\nolimits}
\newcommand{\Hom}{\operatorname{Hom}\nolimits}
\renewcommand{\Im}{\operatorname{Im}\nolimits}
\newcommand{\ind}{\operatorname{ind}\nolimits}
\newcommand{\m}{\operatorname{o}\nolimits}
\newcommand{\np}{\operatorname{np}\nolimits}
\newcommand{\s}{\operatorname{s}\nolimits}
\newcommand{\soc}{\operatorname{soc}\nolimits}
\newcommand{\ver}{\operatorname{0}\nolimits}
\newcommand{\trigid}{\mbox{\rm $\tau$-rigid}\hspace{.01in}}
\newcommand{\bLambda}{\overline{\Lambda}}
\newcommand{\Ma}{\Delta}
\newcommand{\Mat}{\Delta(\Lambda)}
\newcommand{\xto}{\xrightarrow}
\newcommand{\kD}{D}
\begin{document}
\title[Characterizing $\tau$-rigid-finite algebras with radical square zero]{Characterizing $\tau$-rigid-finite algebras with radical square zero}
\author[T.Adachi]{Takahide Adachi}\thanks{
The author wishes to express his sincere gratitude to Professor Osamu Iyama for suggesting these problems and valuable advice.
He is grateful to Takuma Aihara for helpful discussions.
He thanks Yuya Mizuno for useful comments.
}
\address{Graduate School of Mathematics, Nagoya University, Frocho, Chikusaku, Nagoya, 464-8602, Japan}
\email{m09002b@math.nagoya-u.ac.jp}
\begin{abstract}
In this paper, we give a characterization of $\tau$-rigid-finite algebras with radical square zero in terms of the separated quivers,
which is an analog of a famous characterization of representation-finite algebras with radical square zero due to Gabriel.
\end{abstract}
\maketitle
%%%%%%%%%%%%%%%%%%%%%%%%%%%%%%%%%%%%%%%%%%%%%%%%%%%%%%%%%%%%%%%%%%%%%%%%%%%%%%%%%%%%%%%%%%%%%%%%%%%%%%%%%%%%%%%%%%%%%%%%%%%%%%%%%%%%%%%%%%%%%%%%%%%%%%%%%%%
%%%%%%%%%%%%%%%%%%%%%%%%%%%%%%%%%%%%%%%%%%%%%%%%%%%%%%%%%%%%%%%%%%%%%%%%%%%%%%%%%%%%%%%%%%%%%%%%%%%%%%%%%%%%%%%%%%%%%%%%%%%%%%%%%%%%%%%%%%%%%%%%%%%%%%%%%%%
%%%%%%%%%%%%%%%%%%%%%%%%%%%%%%%%%%%%%%%%%%%%%%%%%%%%%%%%%%%%%%%%%%%%%%%%%%%%%%%%%%%%%%%%%%%%%%%%%%%%%%%%%%%%%%%%%%%%%%%%%%%%%%%%%%%%%%%%%%%%%%%%%%%%%%%%%%%
%%%%%%%%%%%%%%%%%%%%%%%%%%%%%%%%%%%%%%%%%%%%%%%%%%%%%%%%%%%%%%%%%%%%%%%%%%%%%%%%%%%%%%%%%%%%%%%%%%%%%%%%%%%%%%%%%%%%%%%%%%%%%%%%%%%%%%%%%%%%%%%%%%%%%%%%%%%
%%%%%%%%%%%%%%%%%%%%%%%%%%%%%%%%%%%%%%%%%%%%%%%%%%%%%%%%%%%%%%%%%%%%%%%%%%%%%%%%%%%%%%%%%%%%%%%%%%%%%%%%%%%%%%%%%%%%%%%%%%%%%%%%%%%%%%%%%%%%%%%%%%%%%%%%%%%
\section{Introduction}
In 1980's, Auslander-Smalo \cite{AS} (see also \cite{Sk, ASS}) studied a class of modules, 
which are called $\tau$-rigid nowadays (see Definition \ref{2.1}), over finite dimensional algebras.
From the perspective of tilting mutation theory, 
the authors in \cite{AIR} introduced the notion of (support) $\tau$-tilting modules as a special class of $\tau$-rigid modules.
They correspond bijectively with many important objects in representation theory, 
{\it i.e.}, functorially finite torsion classes, two-term silting complexes and cluster-tilting objects in a special cases.
Therefore it is important to classify $\tau$-rigid-finite algebras, that is, algebras having
only finitely many indecomposable $\tau$-rigid modules, or equivalently basic support $\tau$-tilting modules (see \cite{DIJ}).
Recently, $\tau$-rigid modules are studied by several authors 
(Jasso \cite{Ja}, Mizuno \cite{Mi} and Malicki-de la Pe$\tilde{\mathrm{n}}$a-Skowro$\acute{\mathrm{n}}$ski \cite{MPS}) and
in particular, various aspects of $\tau$-rigid modules over algebras with radical square zero are studied by Zhang \cite{Zh}.

In this paper, we study $\tau$-rigid modules over algebras with radical square zero, 
which provide one of the most fundamental classes of algebras in representation theory 
({\it e.g.}, work of Yoshii \cite{Yo} in 1956 and Gabriel \cite{Ga} in 1972).
For an algebra $\Lambda$ with radical square zero, an important role
is played by a stable equivalence $F:\smod \Lambda \rightarrow \smod KQ^{\s}$,
where $Q^{\s}$ is the separated quiver for $\Lambda$, which is defined by the vertex set $Q^{\s}_{\ver}=\{ i^{+},i^{-}\mid i\in Q_{\ver}\}$ 
and the arrow set $Q^{\s}_{\arr}=\{ i^{+}\rightarrow j^{-}\mid i\rightarrow j\ {\textnormal{in}}\ Q_{\arr}\}$ 
for the vertex set $Q_{0}$ and the arrow set $Q_{1}$ of the quiver $Q$ for $\Lambda$.
In particular, we have the following famous theorem characterizing representation-finiteness.
\begin{mainthm}\cite{Ga, ARS}
Let $\Lambda$ be a finite dimensional algebra with radical square zero.
Then the following are equivalent:
\begin{itemize}
\item[(1)] $\Lambda$ is representation-finite.
\item[(2)] The separated quiver for $\Lambda$ is a disjoint union of Dynkin quivers.
\end{itemize}
\end{mainthm}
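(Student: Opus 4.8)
The plan is to reduce the statement to Gabriel's classification of representation-finite hereditary algebras by transporting representation-finiteness across the stable equivalence $F\colon\smod\Lambda\to\smod KQ^{\s}$ recorded in the introduction. The guiding observation is that $KQ^{\s}$ is far more tractable than $\Lambda$: since every arrow of the separated quiver runs from a vertex $i^{+}$ to a vertex $j^{-}$, the quiver $Q^{\s}$ admits no path of length two, and in particular no oriented cycle, so $KQ^{\s}$ is a finite-dimensional hereditary algebra to which Gabriel's theorem applies verbatim.

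First I would make precise the principle that a stable equivalence preserves representation-finiteness. An equivalence $\smod\Lambda\simeq\smod KQ^{\s}$ restricts to a bijection between the isomorphism classes of indecomposable non-projective $\Lambda$-modules and those of indecomposable non-projective $KQ^{\s}$-modules, because the indecomposable objects of a stable category are precisely the indecomposable non-projective modules (such a module keeps a local endomorphism ring after killing the projective-factoring ideal, and two of them are stably isomorphic exactly when they are isomorphic). Since any finite-dimensional algebra has only finitely many indecomposable projective modules, one for each vertex of its quiver, an algebra is representation-finite if and only if it has only finitely many indecomposable non-projective modules. Combining these two remarks yields that $\Lambda$ is representation-finite if and only if $KQ^{\s}$ is representation-finite.

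It then remains to characterize representation-finiteness of $KQ^{\s}$. As noted above $KQ^{\s}$ is hereditary, so Gabriel's theorem gives that $KQ^{\s}$ is representation-finite precisely when each connected component of $Q^{\s}$ is a Dynkin quiver, that is, when $Q^{\s}$ is a disjoint union of Dynkin quivers; here the isolated vertices of $Q^{\s}$ arising from sources and sinks of $Q$ are of type $A_{1}$ and cause no trouble. Chaining this with the previous paragraph establishes the equivalence of (1) and (2) in both directions at once.

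The genuinely deep input is Gabriel's theorem itself, which I would invoke as a black box from \cite{Ga, ARS}; its proof rests on the positive-definiteness of the Tits form of a Dynkin quiver together with reflection functors, and reproducing it lies outside the present scope. The only point demanding care on our side is the justification that $F$ transports the finiteness count correctly: one must not forget the harmless finite contribution of the projective indecomposables on each side, and must use that $F$ is an equivalence of the \emph{stable} categories rather than of the module categories themselves.
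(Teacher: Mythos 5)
Your proof is correct and takes essentially the same route as the paper, which treats this theorem as a known result cited from \cite{Ga, ARS}: the machinery the paper assembles in its preliminaries --- the stable equivalence $F:\smod\Lambda\to\smod\Ma$ with $\Ma\simeq KQ^{\s}$ (Proposition \ref{2.7}(1)(2)), the observation that $Q^{\s}$ is bipartite hence $KQ^{\s}$ hereditary, the remark that stable equivalences preserve representation-finiteness, and Gabriel's theorem for path algebras (Proposition \ref{2.5}) --- is precisely the argument you spell out. Your care with the two bookkeeping points (finitely many indecomposable projectives on each side, and indecomposable non-projectives being stably isomorphic iff isomorphic) is exactly what makes the transfer across the stable equivalence legitimate.
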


The following main theorem of this paper is an analog of this result for $\tau$-rigid-finiteness.
A full subquiver of $Q^{\s}$ is called a {\it single subquiver} if, for any $i\in Q_{\ver}$, the vertex set contains at most one of $i^{+}$ or $i^{-}$.
\begin{mainthm}[Theorem \ref{3.1}]\label{main}
Let $\Lambda$ be a finite dimensional algebra with radical square zero.
Then the following are equivalent:
\begin{itemize}
\item[(1)] $\Lambda$ is $\tau$-rigid-finite.
\item[(2)] Every single subquiver of the separated quiver for $\Lambda$ is a disjoint union of Dynkin quivers. 
\end{itemize}
\end{mainthm}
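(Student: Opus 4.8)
The plan is to reduce $\tau$-rigidity over $\Lambda$ to an explicit condition on the separated quiver $Q^{\s}$, exploiting that $\rad^2\Lambda = 0$ forces every syzygy $\Omega M$ to be semisimple. I would start from the standard criterion that an indecomposable $M$ is $\tau$-rigid if and only if $\Hom(f, M)$ is surjective for a minimal projective presentation $P_1 \xrightarrow{f} P_0 \to M \to 0$. Write $\top M = \bigoplus_i S_i^{a_i}$ and record its support $A = \{ i : a_i \neq 0\}$; then $P_0 = \bigoplus_i P_i^{a_i}$, the semisimple module $\Omega M = \Ker(P_0 \to M) \subseteq \rad P_0$ has some support $C$, and $P_1$ is its projective cover. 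The point is that the data $(a_i)$ together with the components of $f$ into the arrow-parts of $\rad P_0$ is exactly a representation of $Q^{\s}$ supported on the vertices $\{i^+ : i \in A\} \cup \{j^- : j \in C\}$.

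I would first observe that, because every arrow of $Q$ maps into $\rad M$, the image of $\Hom(f, M)$ lands in $\bigoplus_j ((\rad M)_j)^{c_j}$; hence surjectivity forces $(\rad M)_j = M_j$ at each $j \in C$, i.e. $A \cap C = \emptyset$. Thus a $\tau$-rigid $M$ automatically lives on a vertex set of $Q^{\s}$ that is single, and the residual surjectivity condition says precisely that the associated representation is rigid over the hereditary algebra $K Q'$, where $Q'$ is the single subquiver spanned by $\{i^+ : i \in A\} \cup \{j^- : j \in C\}$. Conversely, I would show that a rigid $KQ'$-representation over a single subquiver $Q'$, realized as a $\Lambda$-module by letting all arrows outside $Q'$ act as zero, is $\tau$-rigid, again using the disjointness of the $+$- and $-$-supports to verify the two conditions above. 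This sets up a correspondence between indecomposable $\tau$-rigid $\Lambda$-modules and indecomposable rigid representations of single subquivers of $Q^{\s}$.

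With this correspondence, $(2)\Rightarrow(1)$ is immediate: there are only finitely many single subquivers, each $KQ'$ is representation-finite by Theorem A (being a disjoint union of Dynkin quivers), hence has only finitely many indecomposable rigid modules, so $\Lambda$ has only finitely many indecomposable $\tau$-rigid modules together with its finitely many projectives. For $(1)\Rightarrow(2)$ I would argue by contraposition: if some single subquiver $Q'$ is not a disjoint union of Dynkin quivers, then $KQ'$ is a non-Dynkin hereditary algebra and thus has infinitely many indecomposable preprojective modules, all of which are rigid; pulling these back should yield infinitely many pairwise non-isomorphic $\tau$-rigid $\Lambda$-modules.

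The main obstacle is this converse half, namely guaranteeing that the infinite rigid family over a non-Dynkin single subquiver really lifts to genuinely $\tau$-rigid $\Lambda$-modules. The subtlety is that an arrow of $Q$ between two vertices both chosen on the $+$-side can inject an extra simple into $\Omega M$ and thereby violate $A \cap C = \emptyset$, so the naive ``zero elsewhere'' realization need not be $\tau$-rigid. I expect to handle this by passing to a collision-free non-Dynkin single subquiver (for instance shrinking to a Kronecker- or extended-Dynkin-type subquiver whose $+$-vertices carry no internal arrow of $Q$) and lifting its preprojectives there. Verifying that such a subquiver always exists whenever condition $(2)$ fails, and checking that the whole correspondence is genuinely compatible with the stable equivalence $F \colon \smod \Lambda \to \smod KQ^{\s}$ rather than with the merely additive top/radical functor, is the technical heart of the argument.
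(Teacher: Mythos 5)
Your overall architecture --- trade indecomposable $\tau$-rigid $\Lambda$-modules for rigid modules over the path algebras $KQ'$ of single subquivers, then use finiteness of the set of single subquivers and the Dynkin dichotomy for hereditary algebras --- is the same as the paper's (Theorem \ref{3.2}, Proposition \ref{2.4}, Theorem \ref{2.6}), and your direct argument that $\tau$-rigidity forces $A\cap C=\emptyset$ is correct. The genuine gap is exactly the one you flagged, and your proposed repair does not work. For (1)$\Rightarrow$(2) you must lift the infinitely many rigid $KQ'$-modules over a non-Dynkin single subquiver to $\tau$-rigid $\Lambda$-modules, and since extension by zero fails whenever $Q$ has an arrow between two vertices of the $+$-support, you propose to shrink to a ``collision-free'' non-Dynkin single subquiver. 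Such a subquiver need not exist: let $Q$ have vertices $1,2$, two arrows $1\to 2$ and a loop at $1$, and $\Lambda=KQ/J^{2}$. The only non-Dynkin single subquiver of $Q^{\s}$ is the Kronecker quiver on $\{1^{+},2^{-}\}$, and its unique $+$-vertex carries the loop; every other single subquiver is a disjoint union of quivers of type $A_{1}$. So your contrapositive argument produces nothing here, whereas Theorem \ref{3.1} asserts (correctly) that this $\Lambda$ is not $\tau$-rigid-finite.

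The missing idea is that the correct lift is not extension by zero but induction followed by transport along the stable equivalence: given a rigid $KQ'$-module $N$, form $L_{e_{Q'}}(N)=N\otimes_{KQ'}e_{Q'}\Ma$ over $\Ma=KQ^{\s}$, whose minimal projective presentation is supported on $Q'$ by Lemma \ref{2.3}(2), and then take the indecomposable nonprojective $\Lambda$-module $X$ with $FX\simeq L_{e_{Q'}}(N)$, which exists and is unique by Proposition \ref{2.7}; Theorem \ref{3.2} together with Proposition \ref{2.4} then shows $X$ is $\tau$-rigid. In the example above, for the preprojective Kronecker module $N$ of dimension vector $(m,m+1)$ with $m\ge 2$, this $X$ has $\dim X_{1}=2m$ and $\dim X_{2}=m+1$: the loop maps the $m$-dimensional top at vertex $1$ isomorphically onto an extra radical summand, so the syzygy $\Omega X$ stays supported at vertex $2$ and $\add P_{0}^{X}\cap\add P_{1}^{X}=0$ survives; by contrast, the extension by zero has $S_{1}^{m}\subseteq\Omega X$ and is never $\tau$-rigid, by Proposition \ref{2.2}(2). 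These thickened modules give the required infinite family. Note also that the same machinery ($F$ full, $L_{e_{Q'}}$ fully faithful, Lemma \ref{3.4}) is what makes your correspondence injective, i.e.\ lets you recover $M$ from its associated $KQ'$-representation, which your count in (2)$\Rightarrow$(1) quietly uses. So the compatibility with $F$ that you deferred as the ``technical heart'' is not a final verification: it is the construction that must replace your collision-free reduction, and without it the proof of (1)$\Rightarrow$(2) is incomplete.
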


The following result plays a crucial role in the proof of Theorem \ref{main}.
\begin{mainthm}[Theorem \ref{3.2}]
Let $X$ be a $\Lambda$-module.
Let $P_{1}^{X}\xto{}P_{0}^{X}\xto{}X\xto{}0$ be a minimal projective presentation of $X$.
The following are equivalent:
\begin{itemize}
\item[(1)] $X$ is a $\tau$-rigid $\Lambda$-module.
\item[(2)] $FX$ is a $\tau$-rigid $KQ^{\s}$-module and $\add P_{0}^{X}\cap \add P_{1}^{X}=0$.
\end{itemize}
\end{mainthm}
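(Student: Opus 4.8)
The plan is to use the standard characterization of $\tau$-rigidity through a minimal projective presentation: for $P_1^X\xto{p}P_0^X\to X\to 0$, the module $X$ is $\tau$-rigid if and only if $\Hom_\Lambda(p,X)\colon\Hom_\Lambda(P_0^X,X)\to\Hom_\Lambda(P_1^X,X)$ is surjective (the Auslander--Smal{\o}/\cite{AIR} criterion, obtained from $\tau X=D\Tr X$ and the isomorphism $\Hom_\Lambda(N,\tau X)\cong D\Cok(\Hom_\Lambda(p,N))$ evaluated at $N=X$). I will apply the same criterion to $FX$ over the hereditary algebra $KQ^{\s}$. Throughout I exploit the radical-square-zero features: $\rad X$ is semisimple, $\Omega X=\ker(P_0^X\to X)\subseteq\rad P_0^X$ is semisimple so that $P_1^X$ is its projective cover, and the explicit shape of $F$, namely $(FX)_{i^+}=(\top X)_i$, $(FX)_{i^-}=(\rad X)_i$, the arrow $i^+\to j^-$ acting by the radical multiplication $(\top X)_i\to(\rad X)_j$ induced by $i\to j$.

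The first step is to locate the image of $\Hom_\Lambda(p,X)$. Since $\Im p\subseteq\rad P_0^X$, any composite $g\circ p$ with $g\colon P_0^X\to X$ has image in $\rad X$; hence $\Im\Hom_\Lambda(p,X)$ lies in the subspace $H^-:=\{h\colon P_1^X\to X\mid\Im h\subseteq\rad X\}$. Writing $P_1^X=\bigoplus_j P_j^{\,m_j}$ with $m_j=\dim(\Omega X)_j$, a map $P_j\to X$ lies in $H^-$ exactly when the top generator is sent into $(\rad X)_j=(FX)_{j^-}$, so $H^-\cong\bigoplus_j((FX)_{j^-})^{m_j}$, whereas the full space is $\bigoplus_j((FX)_{j^+}\oplus(FX)_{j^-})^{m_j}$. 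Thus $\Hom_\Lambda(p,X)$ can be surjective only if the ``$+$ part'' vanishes, i.e. $(FX)_{j^+}=(\top X)_j=0$ for every $j$ with $m_j\neq 0$. As the vertices in $P_0^X$ are exactly those with $(\top X)_j\neq0$ and those in $P_1^X$ are exactly those with $m_j\neq0$, this vanishing is precisely $\add P_0^X\cap\add P_1^X=0$. So $X$ being $\tau$-rigid forces $\add P_0^X\cap\add P_1^X=0$, and under this condition $\tau$-rigidity of $X$ becomes equivalent to surjectivity of the corestriction $\Hom_\Lambda(P_0^X,X)\to H^-$.

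It remains to identify this corestriction with $\Hom_{KQ^{\s}}(q,FX)$, where $q\colon P_1^{FX}\to P_0^{FX}$ is a minimal projective presentation of $FX$. A direct computation shows $\top FX$ is concentrated at the source vertices, so $P_0^{FX}=F P_0^X$, and $\Omega_{KQ^{\s}}FX$ is the semisimple, hence projective, module $\bigoplus_j(S_{j^-})^{m_j}$ at the sink vertices, giving $P_1^{FX}=\bigoplus_j(P_{j^-})^{m_j}$; both $p$ and $q$ are recorded by the very same subspaces $K_j=\ker\bigl(\bigoplus_{i\to j}(FX)_{i^+}\to(FX)_{j^-}\bigr)$ (note $m_j=\dim K_j$). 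Evaluating on generators, the two induced $\Hom$-maps agree: the $s$-th generator at $j^-$ is sent in each case to $\sum_{i\to j}(FX)_\alpha(c^{(s)}_i)$, where the presentation sends the corresponding generator to $\sum_i c^{(s)}_i\in K_j$ and $(FX)_\alpha$ is the arrow action; only the top part of $g$ contributes, since the radical part is annihilated by every arrow. Having the same image, one map is surjective iff the other is, so — given the $\add$-condition — $X$ is $\tau$-rigid iff $FX$ is $\tau$-rigid over $KQ^{\s}$, which combined with the first step yields the equivalence.

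I expect the main obstacle to be the third paragraph: pinning down the minimal presentation $q$ of $FX$ and verifying, with correct bookkeeping of the multiplicities $m_j$ and of the splitting $X_i\cong(\top X)_i\oplus(\rad X)_i$, that the two presentation maps carry literally the same data $\{K_j\}$, so that the induced $\Hom$-maps coincide. The conceptual key that makes everything go through is the first reduction, namely that $\Im\Hom_\Lambda(p,X)$ is forced into the radical part $H^-$, which is exactly what conjures up the condition $\add P_0^X\cap\add P_1^X=0$.
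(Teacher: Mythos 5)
Your proposal is correct and follows essentially the same route as the paper: both sides are reduced to the surjectivity criterion of Proposition \ref{2.2}(1), the minimal projective presentation of $FX$ is constructed from that of $X$ (the content of Lemma \ref{3.4}), and morphisms $P_{1}^{X}\rightarrow X$ with image in $XJ$ are identified with $\Ma$-morphisms out of $P_{1}^{FX}$ (the content of Lemma \ref{3.3}), with the condition $\add P_{0}^{X}\cap \add P_{1}^{X}=0$ entering in exactly the same way to force images into the radical. The only difference is presentational: where the paper cites Proposition \ref{2.2}(2) for the necessity of the $\add$-condition and uses fullness of $F$ (Proposition \ref{2.7}(1)) to lift morphisms, you verify both facts by hand through vertex-wise linear algebra (the ``only the top part of $g$ contributes'' observation is precisely the needed instance of fullness).
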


Moreover, we have the following bijection.
For an algebra $\Lambda$, we denote by $\trigid\Lambda$ the set of isomorphism classes of indecomposable $\tau$-rigid $\Lambda$-modules
and let $\trigid^{\m}\Lambda:=\{ X\in\trigid\Lambda \mid \add(P_{0}^{X}\oplus P_{1}^{X})=\add \Lambda \}$, 
where $P_{1}^{X}\xto{}P_{0}^{X}\xto{}X\xto{}0$ is a minimal projective presentation of $X$.  
We denote by $\S^{+}$ the set of all connected single subquivers of $Q^{\s}$ except the quivers with exactly one vertex $i^{-}$ for any $i\in Q_{0}$.
\begin{maincor}[Corollary \ref{3.5}]
There is a natural bijection
\begin{align}
\trigid\Lambda\longrightarrow\coprod_{Q^{\prime}\in\S^{+}}\trigid^{\m} KQ^{\prime}. \notag
\end{align}
\end{maincor}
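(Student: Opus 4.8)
The plan is to read off the target datum from a minimal projective presentation and to use Theorem~\ref{3.2} as the only substantial input. Let $X$ be an indecomposable $\tau$-rigid module with minimal projective presentation $P_{1}^{X}\to P_{0}^{X}\to X\to0$. By Theorem~\ref{3.2} we have $\add P_{0}^{X}\cap\add P_{1}^{X}=0$, and I would attach to $X$ the full subquiver $Q_{X}'$ of $Q^{\s}$ on the vertices $\{\,i^{+}\mid P_{i}\in\add P_{0}^{X}\,\}\cup\{\,j^{-}\mid P_{j}\in\add P_{1}^{X}\,\}$. The disjointness $\add P_{0}^{X}\cap\add P_{1}^{X}=0$ says precisely that $Q_{X}'$ contains at most one of $i^{+},i^{-}$ for each $i$, i.e. $Q_{X}'$ is a single subquiver; it is connected because $X$ is indecomposable, since a disconnection of $Q_{X}'$ would block-decompose the presentation map and split $X$, the degenerate case being excluded once more by $\add P_{0}^{X}\cap\add P_{1}^{X}=0$. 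Since $X\neq0$ forces $P_{0}^{X}\neq0$, the subquiver $Q_{X}'$ always contains some vertex $i^{+}$; this is exactly why $\S^{+}$ discards the one-vertex quivers $i^{-}$, whereas the one-vertex quivers $i^{+}$ are kept and will receive the indecomposable projectives $P_{i}$ (for which $P_{1}^{X}=0$ and hence $Q_{X}'=\{i^{+}\}$).

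Next I would set $\Phi(X)$ to be the $KQ_{X}'$-module obtained from the map $P_{1}^{X}\to P_{0}^{X}$ by replacing each $P_{i}$ in $P_{0}^{X}$ with $P_{i^{+}}$, each $P_{j}$ in $P_{1}^{X}$ with $P_{j^{-}}$, and the entries of the map (elements of $\rad\Lambda$) with the corresponding arrows of $Q_{X}'$; concretely $\Phi(X)=\Cok(P_{1}^{X}\to P_{0}^{X})$ computed over $KQ_{X}'$, which one checks to coincide with the restriction $\res_{Q_{X}'}(FX)$. By construction the two projectives of the minimal presentation of $\Phi(X)$ together exhaust every vertex of $Q_{X}'$, and the condition $\add P_{0}^{X}\cap\add P_{1}^{X}=0$ forbids any common summand that would cancel, so $\add\bigl(P_{0}^{\Phi(X)}\oplus P_{1}^{\Phi(X)}\bigr)=\add KQ_{X}'$; thus $\Phi(X)\in\trigid^{\m}KQ_{X}'$ provided $\Phi(X)$ is indecomposable and $\tau$-rigid. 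Indecomposability passes from $X$ to $\Phi(X)$, and for $\tau$-rigidity I would work with the two-term complex $P_{1}^{X}\to P_{0}^{X}$ in $\KKb(\proj\Lambda)$, whose presilting property is equivalent to $X$ being $\tau$-rigid; its $KQ_{X}'$-avatar is exactly the minimal presentation complex of $\Phi(X)$, so it suffices to transport the presilting ($\Hom$-vanishing) condition across the single subquiver.

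For the inverse I would start from $Q'\in\S^{+}$ and $Y\in\trigid^{\m}KQ'$, extend $Y$ by zero to a $KQ^{\s}$-module, check it is $\tau$-rigid over $KQ^{\s}$, and apply the stable equivalence $F$ backwards to recover a non-projective indecomposable $\Lambda$-module $X$ (the one-vertex cases $Q'=\{i^{+}\}$ being matched by hand with the projectives $P_{i}$); Theorem~\ref{3.2} read in reverse certifies that $X$ is $\tau$-rigid, while $\add\bigl(P_{0}^{Y}\oplus P_{1}^{Y}\bigr)=\add KQ'$ pins down $Q_{X}'=Q'$ and $\Phi(X)=Y$. The main obstacle, and the only place where the single-subquiver hypothesis does real work, is this transport: because $Q'$ is a full subquiver that is bipartite with sources $i^{+}$ and sinks $j^{-}$, passing between $\proj KQ^{\s}$ and $\proj KQ'$ neither creates nor cancels the relevant $\Hom$-spaces between shifts of the presentation complex, so presilting is preserved in both directions and minimal presentations go to minimal presentations. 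Together with the separate bookkeeping forced by $F$ being only a stable equivalence, so that the indecomposable projectives must be inserted by hand as the one-vertex quivers $i^{+}$, this is what turns the condition $\add P_{0}^{X}\cap\add P_{1}^{X}=0$ into membership in $\trigid^{\m}$ on the nose, and makes the resulting assignment a bijection compatible with $F$.
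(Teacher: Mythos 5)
Your forward construction is exactly the paper's map $X\mapsto (FX)e_{Q^{X}}$ (Theorem~\ref{3.2} plus restriction to the single subquiver supporting the minimal presentation), and that half of the outline is sound. The genuine gap is in your inverse. Given $Q^{\prime}\in\S^{+}$ and $Y\in\trigid^{\m}KQ^{\prime}$, you propose to ``extend $Y$ by zero to a $KQ^{\s}$-module'' and pull back through the stable equivalence $F$. But extension by zero is the wrong functor: the inverse of the restriction $(-)e_{Q^{\prime}}$ on the relevant subcategory is $L_{e_{Q^{\prime}}}(-)=-\otimes_{KQ^{\prime}}e_{Q^{\prime}}\Ma$ (Lemma~\ref{2.3}, Proposition~\ref{2.4}), and $L_{e_{Q^{\prime}}}(Y)$ in general has nonzero components at vertices $j^{-}\notin Q^{\prime}_{\ver}$ that receive arrows from the $+$ vertices of $Q^{\prime}$; it is not the zero-extension. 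Indeed your own key assertion that ``minimal presentations go to minimal presentations'' holds for $L_{e_{Q^{\prime}}}$ and fails for the zero-extension: transporting the minimal presentation of $Y$ into $\proj KQ^{\s}$ and taking the cokernel produces $L_{e_{Q^{\prime}}}(Y)$ (since $L_{e_{Q^{\prime}}}$ is right exact and sends $P_{v}^{KQ^{\prime}}$ to $e_{v}\Ma$), not the zero-extension. Consequently, for the module you actually construct, the claim that $\add(P_{0}^{Y}\oplus P_{1}^{Y})=\add KQ^{\prime}$ ``pins down $Q_{X}^{\prime}=Q^{\prime}$'' breaks down, and with it the verification of the second condition of Theorem~\ref{3.2}.

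Here is a counterexample to the inverse as you state it. Let $Q$ have vertices $1,2$, one loop at $1$ and one arrow $a\colon 1\to 2$, so that $Q^{\s}$ is $1^{-}\leftarrow 1^{+}\rightarrow 2^{-}$ together with an isolated vertex $2^{+}$. Take $Q^{\prime}$ to be the full subquiver on $\{1^{+},2^{-}\}$ and $Y=S_{1^{+}}$, whose minimal presentation over $KQ^{\prime}$ is $P_{2^{-}}\to P_{1^{+}}\to Y\to 0$; thus $Y\in\trigid^{\m}KQ^{\prime}$. The zero-extension of $Y$ is the simple $KQ^{\s}$-module $S_{1^{+}}$, which is indeed rigid over $KQ^{\s}$, but its minimal presentation is $P_{1^{-}}\oplus P_{2^{-}}\to P_{1^{+}}$, whose support $\{1^{+},1^{-},2^{-}\}$ is \emph{not} single, and its preimage under $F$ is the simple $\Lambda$-module $S_{1}$, with minimal presentation $e_{1}\Lambda\oplus e_{2}\Lambda\to e_{1}\Lambda\to S_{1}\to 0$. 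Since $e_{1}\Lambda$ is a common summand, Theorem~\ref{3.2} does not apply, and in fact $S_{1}$ is not $\tau$-rigid: your inverse lands outside $\trigid\Lambda$. (It is also not injective: over a quiver with arrows $1\to 2$ and $1\to 3$, the zero-extensions of the simples at $1^{+}$ over two different single subquivers coincide.) The correct preimage of $Y$ is $F^{-1}(L_{e_{Q^{\prime}}}(Y))=e_{1}\Lambda/a\Lambda$, where $L_{e_{Q^{\prime}}}(Y)$ has one-dimensional components at $1^{+}$ and at $1^{-}$ and vanishes at $2^{-}$; this module is $\tau$-rigid and is sent back to $Y$ by the forward map. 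So the surjectivity and injectivity arguments must be run with $L_{e_{Q^{\prime}}}$, i.e.\ with Proposition~\ref{2.4}, exactly as the paper does; once this replacement is made, your outline agrees with the paper's proof.
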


As an application, we have the following results.
First, we give a positive answer to a question given by Zhang \cite{Zh}.
Namely, for an algebra $\Lambda$ with radical square zero, if every indecomposable $\Lambda$-module is $\tau$-rigid, then $\Lambda$ is representation-finite.
Secondly, we give an example of $\tau$-rigid-finite algebras which is not representation-finite. 
Let $\Lambda$ be a multiplicity-free Brauer cyclic graph algebra with $n$ vertices.
Then $\Lambda$ is not representation-finite.
Moreover, it is $\tau$-rigid-finite if and only if $n$ is odd.
In this case, the cardinality of $\trigid\Lambda$ is 
\begin{align}
|\trigid\Lambda|=2n^{2}-n. \notag
\end{align}

Throughout this paper, we use the following notation.
By an algebra we mean basic and finite dimensional algebra over an algebraically closed field $K$, 
and by a module we mean a finite dimensional right module.
For an algebra $\Lambda$, we denote by $\mod \Lambda$ the category of $\Lambda$-modules, by $\smod\Lambda$ the stable category,
and by $\tau$ the Auslander-Reiten translation of $\Lambda$.
We call a quiver {\it Dynkin} (respectively, {\it Euclidean}) 
if the underlying graph is one of Dynkin (respectively, Euclidean) graphs of type $A,D$ 
and $E$ (respectively, $\widetilde{A},\widetilde{D}$ and $\widetilde{E}$).
We refer to \cite{ASS,ARS} for background on representation theory.
%%%%%%%%%%%%%%%%%%%%%%%%%%%%%%%%%%%%%%%%%%%%%%%%%%%%%%%%%%%%%%%%%%%%%%%%%%%%%%%%%%%%%%%%%%%%%%%%%%%%%%%%%%%%%%%%%%%%%%%%%%%%%%%%%%%%%%%%%%%%%%%%%%%%%%%%%%%
%%%%%%%%%%%%%%%%%%%%%%%%%%%%%%%%%%%%%%%%%%%%%%%%%%%%%%%%%%%%%%%%%%%%%%%%%%%%%%%%%%%%%%%%%%%%%%%%%%%%%%%%%%%%%%%%%%%%%%%%%%%%%%%%%%%%%%%%%%%%%%%%%%%%%%%%%%%
%%%%%%%%%%%%%%%%%%%%%%%%%%%%%%%%%%%%%%%%%%%%%%%%%%%%%%%%%%%%%%%%%%%%%%%%%%%%%%%%%%%%%%%%%%%%%%%%%%%%%%%%%%%%%%%%%%%%%%%%%%%%%%%%%%%%%%%%%%%%%%%%%%%%%%%%%%%
%%%%%%%%%%%%%%%%%%%%%%%%%%%%%%%%%%%%%%%%%%%%%%%%%%%%%%%%%%%%%%%%%%%%%%%%%%%%%%%%%%%%%%%%%%%%%%%%%%%%%%%%%%%%%%%%%%%%%%%%%%%%%%%%%%%%%%%%%%%%%%%%%%%%%%%%%%%
%%%%%%%%%%%%%%%%%%%%%%%%%%%%%%%%%%%%%%%%%%%%%%%%%%%%%%%%%%%%%%%%%%%%%%%%%%%%%%%%%%%%%%%%%%%%%%%%%%%%%%%%%%%%%%%%%%%%%%%%%%%%%%%%%%%%%%%%%%%%%%%%%%%%%%%%%%%
%%%%%%%%%%%%%%%%%%%%%%%%%%%%%%%%%%%%%%%%%%%%%%%%%%%%%%%%%%%%%%%%%%%%%%%%%%%%%%%%%%%%%%%%%%%%%%%%%%%%%%%%%%%%%%%%%%%%%%%%%%%%%%%%%%%%%%%%%%%%%%%%%%%%%%%%%%%
%%%%%%%%%%%%%%%%%%%%%%%%%%%%%%%%%%%%%%%%%%%%%%%%%%%%%%%%%%%%%%%%%%%%%%%%%%%%%%%%%%%%%%%%%%%%%%%%%%%%%%%%%%%%%%%%%%%%%%%%%%%%%%%%%%%%%%%%%%%%%%%%%%%%%%%%%%%
%%%%%%%%%%%%%%%%%%%%%%%%%%%%%%%%%%%%%%%%%%%%%%%%%%%%%%%%%%%%%%%%%%%%%%%%%%%%%%%%%%%%%%%%%%%%%%%%%%%%%%%%%%%%%%%%%%%%%%%%%%%%%%%%%%%%%%%%%%%%%%%%%%%%%%%%%%%
%%%%%%%%%%%%%%%%%%%%%%%%%%%%%%%%%%%%%%%%%%%%%%%%%%%%%%%%%%%%%%%%%%%%%%%%%%%%%%%%%%%%%%%%%%%%%%%%%%%%%%%%%%%%%%%%%%%%%%%%%%%%%%%%%%%%%%%%%%%%%%%%%%%%%%%%%%%
%%%%%%%%%%%%%%%%%%%%%%%%%%%%%%%%%%%%%%%%%%%%%%%%%%%%%%%%%%%%%%%%%%%%%%%%%%%%%%%%%%%%%%%%%%%%%%%%%%%%%%%%%%%%%%%%%%%%%%%%%%%%%%%%%%%%%%%%%%%%%%%%%%%%%%%%%%%
\section{Preliminaries}
In this section, we collect some results which are necessary in this paper.
Let $\Lambda$ be an algebra and $J:=J_{\Lambda}$ a Jacobson radical of $\Lambda$.
For a $\Lambda$-module $X$, we denote by  
\begin{align}
P_{1}^{X}\overset{p}{\rightarrow}P_{0}^{X}\overset{q}{\rightarrow}X\rightarrow 0 \notag 
\end{align}
a minimal projective presentation of $X$.
%%%%%%%%%%%%%%%%%%%%%%%%%%%%%%%%%%%%%%%%%%%%%%%%%%%%%%%%%%%%%%%%%%%%%%%%%%%%%%%%%%%%%%%%%%%%%%%%%%%%%%%%%%%%%%%%%%%%%%%%%%%%%%%%%%%%%%%%%%%%%%%%%%%%%%%%%%%
%%%%%%%%%%%%%%%%%%%%%%%%%%%%%%%%%%%%%%%%%%%%%%%%%%%%%%%%%%%%%%%%%%%%%%%%%%%%%%%%%%%%%%%%%%%%%%%%%%%%%%%%%%%%%%%%%%%%%%%%%%%%%%%%%%%%%%%%%%%%%%%%%%%%%%%%%%%
%%%%%%%%%%%%%%%%%%%%%%%%%%%%%%%%%%%%%%%%%%%%%%%%%%%%%%%%%%%%%%%%%%%%%%%%%%%%%%%%%%%%%%%%%%%%%%%%%%%%%%%%%%%%%%%%%%%%%%%%%%%%%%%%%%%%%%%%%%%%%%%%%%%%%%%%%%%
%%%%%%%%%%%%%%%%%%%%%%%%%%%%%%%%%%%%%%%%%%%%%%%%%%%%%%%%%%%%%%%%%%%%%%%%%%%%%%%%%%%%%%%%%%%%%%%%%%%%%%%%%%%%%%%%%%%%%%%%%%%%%%%%%%%%%%%%%%%%%%%%%%%%%%%%%%%
%%%%%%%%%%%%%%%%%%%%%%%%%%%%%%%%%%%%%%%%%%%%%%%%%%%%%%%%%%%%%%%%%%%%%%%%%%%%%%%%%%%%%%%%%%%%%%%%%%%%%%%%%%%%%%%%%%%%%%%%%%%%%%%%%%%%%%%%%%%%%%%%%%%%%%%%%%%
\subsection{$\tau$-rigid modules}\label{subsec2.1}
We recall basic properties of $\tau$-rigid modules.
\begin{definition}\label{2.1}
A $\Lambda$-module $X$ is called {\it $\tau$-rigid} if $\Hom_{\Lambda}(X,\tau X)=0$.
We denote by $\trigid \Lambda$ the set of isomorphism classes of indecomposable $\tau$-rigid $\Lambda$-modules.
An algebra $\Lambda$ is called {\it $\tau$-rigid-finite} if $\trigid \Lambda$ is a finite set.
\end{definition}
By Auslander-Reiten duality $\Ext_{\Lambda}^{1}(X,Y)\simeq \kD\cHom_{\Lambda}(Y,\tau X)$, 
every $\tau$-rigid $\Lambda$-module $X$ is {\it rigid} ({\it i.e.} $\Ext_{\Lambda}^{1}(X,X)=0$), and
the converse is true if $\Lambda$ is hereditary ({\it e.g.}, path algebra $KQ$ of an acyclic quiver $Q$).

%%%%%%%%%%%%%%%%%%%%%%%%%%%%%%%%%%%%%%%%%%%%%%%%%%%%%%%%%%%%%%%%%%%%%%%%%%%%%%%%%%%%%%%%%%%%%%%%%%%%%%%%%%%%%%%%%%%%%%%%%%%%%%%%%%%%%%%%%%%%%%%%%%%%%%%%%%%
The following proposition plays an important role in this paper.
\begin{proposition}\cite[Proposition 2.4 and 2.5]{AIR}\label{2.2}
For a $\Lambda$-module $X$, the following hold.
\begin{itemize}
\item[(1)] $X$ is $\tau$-rigid if and only if the map $(p,X):\Hom_{\Lambda}(P_{0}^{X},X)\rightarrow \Hom_{\Lambda}(P_{1}^{X},X)$ is surjective. 
\item[(2)] If $X$ is $\tau$-rigid, then we have $\add P_{0}^{X}\cap \add P_{1}^{X}=0$.
\end{itemize}
\end{proposition}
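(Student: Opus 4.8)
The plan is to deduce both parts from a single functorial isomorphism relating $\Hom_{\Lambda}(Y,\tau X)$ to the cokernel of the map that $p$ induces on the contravariant Hom functors. Concretely, for every $\Lambda$-module $Y$ I would establish a natural isomorphism
\[
\Hom_{\Lambda}(Y,\tau X)\cong \kD\Cok\bigl(\Hom_{\Lambda}(P_{0}^{X},Y)\xrightarrow{(p,Y)}\Hom_{\Lambda}(P_{1}^{X},Y)\bigr),
\]
where $\kD=\Hom_{K}(-,K)$. Granting this, part (1) is immediate: specializing $Y=X$, the space $\Hom_{\Lambda}(X,\tau X)$ vanishes exactly when the displayed cokernel is zero, that is, exactly when $(p,X)$ is surjective, which is the definition of $\tau$-rigidity.

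To establish the isomorphism I would use $\tau X=\kD\Tr X$, where $\Tr X=\Cok(p^{\ast})$ is computed from the given minimal presentation and $(-)^{\ast}=\Hom_{\Lambda}(-,\Lambda)$. Applying $(-)^{\ast}$ to the presentation yields a right exact sequence of left modules $\Hom_{\Lambda}(P_{0}^{X},\Lambda)\xrightarrow{p^{\ast}}\Hom_{\Lambda}(P_{1}^{X},\Lambda)\to\Tr X\to 0$. The tensor--hom adjunction gives $\Hom_{\Lambda}(Y,\kD M)\cong\kD(Y\otimes_{\Lambda}M)$ for a left module $M$, so it suffices to compute $Y\otimes_{\Lambda}\Tr X$. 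Tensoring the above right exact sequence with $Y$ (right exactness of $\otimes$) and using the natural isomorphism $Y\otimes_{\Lambda}\Hom_{\Lambda}(P,\Lambda)\cong\Hom_{\Lambda}(P,Y)$ for finitely generated projective $P$ (which reduces to $P=\Lambda$ and extends additively) identifies $Y\otimes_{\Lambda}\Tr X$ with $\Cok((p,Y))$, giving the claim. I expect this to be the main obstacle of the whole argument, though it is essentially bookkeeping: one must keep the left/right module structures straight and check that the contravariant map $p^{\ast}$ genuinely corresponds to $(p,Y)$ under the two identifications, so that the specialization $Y=X$ is legitimate.

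For part (2) I would argue by contradiction using (1). Suppose some indecomposable projective $P$ lies in $\add P_{0}^{X}\cap\add P_{1}^{X}$, and fix a summand inclusion $j\colon P\hookrightarrow P_{0}^{X}$, a summand projection $\pi\colon P_{1}^{X}\twoheadrightarrow P$ and the corresponding inclusion $\iota\colon P\hookrightarrow P_{1}^{X}$ with $\pi\iota=1_{P}$. Set $g:=q\circ j\circ\pi\colon P_{1}^{X}\to X$. Since $X$ is $\tau$-rigid, $(p,X)$ is surjective by (1), so $g=f\circ p$ for some $f\colon P_{0}^{X}\to X$; restricting along $\iota$ gives $q\circ j=g\circ\iota=f\circ(p\circ\iota)$. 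On one hand, minimality forces $\Im p=\Ker q\subseteq\rad P_{0}^{X}=P_{0}^{X}J$, so $p\circ\iota$ has image in $P_{0}^{X}J$, whence $f\circ(p\circ\iota)$ has image in $f(P_{0}^{X})J\subseteq XJ=\rad X$. On the other hand $q\circ j$ is nonzero on tops: the projective cover $q$ induces an isomorphism $\top P_{0}^{X}\xrightarrow{\sim}\top X$ carrying the summand $\top P$ injectively, so $\Im(q\circ j)\not\subseteq\rad X$. This contradiction proves $\add P_{0}^{X}\cap\add P_{1}^{X}=0$. The point requiring care here is the conversion of "$p\circ\iota$ factors through $\rad P_{0}^{X}$" into "$f\circ(p\circ\iota)$ lands in $\rad X$", which rests on minimality ($\Ker q\subseteq\rad P_{0}^{X}$) together with $\Lambda$-linearity of $f$ sending $P_{0}^{X}J$ into $XJ$.
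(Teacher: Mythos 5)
The paper gives no proof of this proposition---it is quoted directly from \cite{AIR}---and your proposal essentially reproduces the argument of that cited source: part (1) via the identification $\Hom_{\Lambda}(Y,\tau X)\cong \kD(Y\otimes_{\Lambda}\Tr X)\cong \kD\Cok\bigl((p,Y)\bigr)$ coming from the transpose and tensor--hom adjunction, and part (2) by playing the surjectivity of $(p,X)$ against minimality (the image of $p$ lies in $P_{0}^{X}J$, while a projective cover composed with a split inclusion is nonzero on tops). Both steps are correct, including the two delicate points you flag: the bimodule bookkeeping in $Y\otimes_{\Lambda}\Hom_{\Lambda}(P,\Lambda)\cong\Hom_{\Lambda}(P,Y)$, and the passage from $\Im(p\iota)\subseteq P_{0}^{X}J$ to $\Im(fp\iota)\subseteq XJ$ via $\Lambda$-linearity of $f$.
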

%%%%%%%%%%%%%%%%%%%%%%%%%%%%%%%%%%%%%%%%%%%%%%%%%%%%%%%%%%%%%%%%%%%%%%%%%%%%%%%%%%%%%%%%%%%%%%%%%%%%%%%%%%%%%%%%%%%%%%%%%%%%%%%%%%%%%%%%%%%%%%%%%%%%%%%%%%%
For an idempotent $e\in \Lambda$, we consider two $K$-linear functors
\begin{align}
L_{e}(-):=-\otimes_{e\Lambda e}e\Lambda: \mod (e\Lambda e)\longrightarrow \mod \Lambda,\ \ 
R_{e}(-):=(-)e:\mod \Lambda\longrightarrow \mod (e\Lambda e). \notag
\end{align}
Then $(L_{e},R_{e})$ is an adjoint pair.
Moreover, the following result gives a  connection between $\tau$-rigid $(e\Lambda e)$-modules and $\tau$-rigid $\Lambda$-modules.
\begin{lemma}\cite[I.6.8]{ASS}\label{2.3}
Let $\Lambda$ be an algebra and $e\in \Lambda$ an idempotent.
\begin{itemize}
\item[(1)] The functor $L_{e}$ is fully faithful and there exists a functorial isomorphism $R_{e} L_{e}\simeq 1_{\mod eAe}$.
In particular, $L_{e}$ and $R_{e}$ induce mutually quasi-inverse equivalences 
between categories $\mod(e\Lambda e)$ and $\Im L_{e}:=\{ L_{e}(X)\mid X\in\mod(e\Lambda e)\}$.
\item[(2)] A $\Lambda$-module $X$ is in the category $\Im L_{e}$ if and only if $P_{0}^{X}\oplus P_{1}^{X}\in \add e\Lambda$.
\end{itemize}
\end{lemma}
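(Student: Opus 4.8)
The plan is to establish the two parts in turn, taking as given (as recorded just above the statement) that $(L_e,R_e)$ is an adjoint pair.

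For part (1), I would first compute the composite $R_eL_e$ explicitly: for $M\in\mod(e\Lambda e)$ one has $R_eL_e(M)=(M\otimes_{e\Lambda e}e\Lambda)e=M\otimes_{e\Lambda e}e\Lambda e$, and the map $\theta_M\colon M\otimes_{e\Lambda e}e\Lambda e\to M$, $m\otimes eae\mapsto m\cdot eae$, is a natural isomorphism. The crucial point is to verify that the adjunction unit $\eta_M\colon M\to R_eL_e(M)$ is the map $m\mapsto m\otimes e$; since $\theta_M(m\otimes e)=m\cdot e=m$, the unit is inverse to $\theta_M$ and hence a natural isomorphism. Invoking the standard criterion that a left adjoint is fully faithful precisely when its unit is a natural isomorphism, $L_e$ is fully faithful. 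For the final clause, a fully faithful functor is an equivalence onto its image $\Im L_e$, and $R_e$ is a quasi-inverse there: we already have $R_eL_e\cong 1$, while for $Y=L_e(M)\in\Im L_e$ the computation $L_eR_e(Y)=L_eR_eL_e(M)\cong L_e(M)=Y$ supplies the other isomorphism.

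For part (2), I would first record that $L_e$ carries projectives into $\add e\Lambda$: since $L_e(e\Lambda e)=e\Lambda e\otimes_{e\Lambda e}e\Lambda=e\Lambda$ and $L_e$ is additive and fully faithful, it restricts to an equivalence $\add(e\Lambda e)\xto{\sim}\add(e\Lambda)$ sending the indecomposable projective $e\Lambda e$-modules to the indecomposable summands of $e\Lambda$; moreover $L_e=-\otimes_{e\Lambda e}e\Lambda$ is right exact. For the forward direction, if $X\cong L_e(M)$ I would take a minimal projective presentation $Q_1\to Q_0\to M\to 0$ over $e\Lambda e$ and apply $L_e$ to obtain a projective presentation $L_e(Q_1)\to L_e(Q_0)\to X\to 0$ whose outer terms lie in $\add e\Lambda$; since the terms $P_0^X,P_1^X$ of the minimal presentation of $X$ are direct summands of the respective terms of any projective presentation, and $\add e\Lambda$ is closed under summands, it follows that $P_0^X\oplus P_1^X\in\add e\Lambda$. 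For the converse, if $P_0^X,P_1^X\in\add e\Lambda$ I would write $P_i^X\cong L_e(Q_i)$ with $Q_i$ projective over $e\Lambda e$ via the equivalence above, lift $p\colon P_1^X\to P_0^X$ through full faithfulness to a map $\tilde p\colon Q_1\to Q_0$ with $L_e(\tilde p)=p$, and set $M:=\Cok\tilde p$; right exactness then gives $L_e(M)=\Cok L_e(\tilde p)=\Cok p=X$, so $X\in\Im L_e$.

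The two steps I expect to require the most care are the following. In part (1) one must genuinely confirm that the concrete isomorphism $R_eL_e(M)\cong M$ coincides with the adjunction unit, not merely that some isomorphism exists, because full faithfulness follows specifically from invertibility of the unit. In the forward direction of part (2) the delicate point is the passage from the (generally non-minimal) presentation produced by $L_e$ to the minimal one: this rests on the fact that over a semiperfect algebra the minimal projective presentation is isomorphic to a direct summand of every projective presentation, which is exactly what propagates $\add e\Lambda$-membership down to $P_0^X$ and $P_1^X$.
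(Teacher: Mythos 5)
Your proof is correct. There is, however, nothing in the paper to compare it against: the paper does not prove this lemma but quotes it from \cite[I.6.8]{ASS}, so your argument is supplying what the citation leaves out. What you give is the standard, self-contained proof, and the steps that actually need care are all present: the identification $(M\otimes_{e\Lambda e}e\Lambda)e=M\otimes_{e\Lambda e}e\Lambda e$ (justified by the left $(e\Lambda e)$-module decomposition $e\Lambda=e\Lambda e\oplus e\Lambda(1-e)$, on which right multiplication by $e$ acts as the identity and as zero respectively); the verification that the adjunction unit is $m\mapsto m\otimes e$, hence invertible, so that full faithfulness follows from the unit criterion rather than from the mere existence of some isomorphism $R_eL_e(M)\cong M$; the fact that $L_e$ restricts to an equivalence $\add(e\Lambda e)\to\add(e\Lambda)$ (using idempotent completeness of $\mod(e\Lambda e)$); right exactness of $-\otimes_{e\Lambda e}e\Lambda$; and the summand property of minimal projective presentations over a finite dimensional (hence semiperfect) algebra, which is what propagates membership in $\add e\Lambda$ from an arbitrary presentation down to $P_0^X$ and $P_1^X$. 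Two cosmetic remarks only: in part (1) the naturality of the isomorphism $L_eR_e(Y)\cong Y$ on $\Im L_e$ is cleanest via the counit (it is invertible on $\Im L_e$ by the triangle identity, since the unit is invertible), which is implicit in your computation; and in part (2) your constructions produce $X\cong L_e(M)$ rather than literal equality, so $\Im L_e$ must be read as the essential image --- clearly the intended reading, since the condition $P_0^X\oplus P_1^X\in\add e\Lambda$ is isomorphism-invariant.
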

%%%%%%%%%%%%%%%%%%%%%%%%%%%%%%%%%%%%%%%%%%%%%%%%%%%%%%%%%%%%%%%%%%%%%%%%%%%%%%%%%%%%%%%%%%%%%%%%%%%%%%%%%%%%%%%%%%%%%%%%%%%%%%%%%%%%%%%%%%%%%%%%%%%%%%%%%%%
We have the following result.
\begin{proposition}\label{2.4}
Let $\Lambda$ be an algebra and $e\in\Lambda$ an idempotent.
Assume that a $\Lambda$-module $X$ is in $\Im L_{e}$.
Then $X$ is $\tau$-rigid if and only if the $(e\Lambda e)$-module $Xe$ is $\tau$-rigid.
In particular, $L_{e}$ and $R_{e}$ induce mutually inverse bijections
\begin{align}
\trigid(e\Lambda e)\longleftrightarrow \trigid \Lambda \cap \Im L_{e}.\notag
\end{align}
\end{proposition}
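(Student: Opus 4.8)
The plan is to reduce both $\tau$-rigidity conditions to the surjectivity criterion of Proposition \ref{2.2}(1) and then to match the two criteria across the functors $L_e,R_e$. Set $\Gamma:=e\Lambda e$ and let $P_{1}^{X}\xrightarrow{p}P_{0}^{X}\xrightarrow{q}X\to0$ be a minimal projective presentation of $X$ over $\Lambda$. Since $X\in\Im L_e$, Lemma \ref{2.3}(2) gives $P_{0}^{X}\oplus P_{1}^{X}\in\add e\Lambda$, which is exactly what lets us compare presentations.

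First I would show that applying $R_e=(-)e$ produces a \emph{minimal} projective presentation of $Xe$ over $\Gamma$. The functor $R_e$ is exact and restricts to an equivalence $\add e\Lambda\to\proj\Gamma$ sending $e\Lambda$ to $\Gamma$, so $P_{1}^{X}e\xrightarrow{pe}P_{0}^{X}e\xrightarrow{qe}Xe\to0$ is exact with projective terms. The one point needing care — and the step I expect to be the main obstacle — is minimality: I would check that $R_e$ preserves radicals of objects of $\add e\Lambda$, i.e. $(\rad_\Lambda P)e=\rad_\Gamma(Pe)$. By additivity this reduces to the generator, where $R_e(\rad_\Lambda e\Lambda)=R_e(eJ)=eJe=\rad_\Gamma\Gamma$ using the standard identity $J_\Gamma=eJe$. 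Since $R_e$ is exact, it then carries the projective covers constituting the minimal presentation of $X$ to projective covers over $\Gamma$, so $P_{i}^{Xe}\cong P_{i}^{X}e$ and $pe$ is the corresponding syzygy map.

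Next I would transport the surjectivity criterion. Each $P_{i}^{X}\in\add e\Lambda\subseteq\Im L_e$, so $L_e(P_{i}^{X}e)\cong P_{i}^{X}$ and $L_e(Xe)\cong X$; full faithfulness of $L_e$ (Lemma \ref{2.3}(1)) then yields a natural isomorphism $\Hom_\Lambda(P_{i}^{X},X)\cong\Hom_\Gamma(P_{i}^{X}e,Xe)$, and $p=L_e(pe)$ under $L_eR_e\cong 1$. Naturality makes the square
\[
\begin{CD}
\Hom_\Lambda(P_{0}^{X},X) @>{(p,X)}>> \Hom_\Lambda(P_{1}^{X},X)\\
@V{\cong}VV @V{\cong}VV\\
\Hom_\Gamma(P_{0}^{X}e,Xe) @>{(pe,Xe)}>> \Hom_\Gamma(P_{1}^{X}e,Xe)
\end{CD}
\]
commute, so the top row is surjective if and only if the bottom row is. Applying Proposition \ref{2.2}(1) over $\Lambda$ and over $\Gamma$ gives that $X$ is $\tau$-rigid if and only if $Xe$ is $\tau$-rigid.

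Finally, for the bijection I would invoke Lemma \ref{2.3}(1): $L_e$ and $R_e$ are mutually quasi-inverse equivalences between $\mod\Gamma$ and $\Im L_e$, hence preserve indecomposability and isomorphism classes. Combining this with the $\tau$-rigidity equivalence just established, $R_e=(-)e$ sends an indecomposable $\tau$-rigid $X\in\Im L_e$ to an indecomposable $\tau$-rigid $Xe$, while $L_e$ sends an indecomposable $\tau$-rigid $\Gamma$-module $Y$ to $L_e(Y)\in\Im L_e$, which is indecomposable and, since $R_eL_eY\cong Y$, $\tau$-rigid. These two assignments are mutually inverse, giving the claimed bijection $\trigid\Gamma\leftrightarrow\trigid\Lambda\cap\Im L_e$.
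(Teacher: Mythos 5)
Your proof is correct and follows essentially the same route as the paper's: apply $(-)e$ to a minimal projective presentation (valid by Lemma \ref{2.3}(2)), verify the resulting presentation of $Xe$ is still minimal, transfer the surjectivity criterion of Proposition \ref{2.2}(1) across the commutative square of Hom-spaces, and deduce the bijection from the equivalence in Lemma \ref{2.3}(1). The only difference is one of detail: where the paper cites Lemma \ref{2.3}(1) for minimality, you justify it explicitly via the radical identity $(\rad_\Lambda P)e=\rad_{e\Lambda e}(Pe)$, which is a welcome elaboration rather than a departure.
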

\begin{proof}
By Lemma \ref{2.3}(2), we have $P_{0}^{X}\oplus P_{1}^{X}\in \add e\Lambda$. Hence the sequence 
\begin{align}
P_{1}^{X}e \xto{pe}P_{0}^{X}e\xto{qe} Xe \xto{} 0 \notag
\end{align}
is a projective presentation.
By Lemma \ref{2.3}(1), the projective presentation is minimal, 
and moreover we have a commutative diagram 
\begin{align}
\xymatrix{
\Hom_{A}(P_{0}^{X},X)\ar[r]^{(p,X)}\ar[d]^{\simeq}&\Hom_{A}(P_{1}^{X},X)\ar[d]^{\simeq}\\
\Hom_{eAe}(P_{0}^{X}e,Xe)\ar[r]^{(pe,Xe)}&\Hom_{eAe}(P_{1}^{X}e,Xe)
}\notag
\end{align}
where the vertical maps are isomorphisms.
By using Proposition \ref{2.2}(1), we have that 
$X$ is a $\tau$-rigid $\Lambda$-module if and only if $Xe$ is a $\tau$-rigid $(e\Lambda e)$-module.
\end{proof}
%%%%%%%%%%%%%%%%%%%%%%%%%%%%%%%%%%%%%%%%%%%%%%%%%%%%%%%%%%%%%%%%%%%%%%%%%%%%%%%%%%%%%%%%%%%%%%%%%%%%%%%%%%%%%%%%%%%%%%%%%%%%%%%%%%%%%%%%%%%%%%%%%%%%%%%%%%%
The following proposition is a well-known result for path algebras.
\begin{proposition}\cite[VII.5.10, VIII.2.7 and VIII.2.9]{ASS}\label{2.5}
Let $Q$ be a connected acyclic quiver and $\Lambda:=KQ$ the path algebra of $Q$.
Then the following hold.
\begin{itemize}
\item[(1)] $\Lambda$ is representation-finite if and only if $Q$ is a Dynkin quiver.
In this case, every indecomposable $\Lambda$-module is rigid.
\item[(2)] If $\Lambda$ is not representation-finite, 
then there exist infinitely many isomorphism classes of indecomposable rigid $\Lambda$-modules.
Moreover, there exists an indecomposable $\Lambda$-module which is not rigid.
\end{itemize}
\end{proposition}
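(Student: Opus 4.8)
The plan is to deduce the statement from Gabriel's theorem together with the homological (Euler) bilinear form. For the hereditary algebra $\Lambda=KQ$ one has, for all modules $M,N$,
\begin{align}
\langle \dim M,\dim N\rangle = \dim_{K}\Hom_{\Lambda}(M,N)-\dim_{K}\Ext_{\Lambda}^{1}(M,N),\notag
\end{align}
where $\langle-,-\rangle$ is the bilinear form on $\ZZ^{Q_{\ver}}$ given by $\langle \mathbf{d},\mathbf{e}\rangle=\sum_{i}d_{i}e_{i}-\sum_{(i\to j)\in Q_{\arr}}d_{i}e_{j}$, and I write $q(\mathbf{d})=\langle\mathbf{d},\mathbf{d}\rangle$ for the associated Tits form. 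Since $K$ is algebraically closed, taking $M=N$ above gives $\dim_{K}\Ext_{\Lambda}^{1}(M,M)=\dim_{K}\End_{\Lambda}(M)-q(\dim M)$, so an indecomposable $M$ is rigid if and only if $q(\dim M)=\dim_{K}\End_{\Lambda}(M)$. This identity is the engine of the whole argument.

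For part (1), suppose first that $Q$ is Dynkin. Then $q$ is positive definite, hence has only finitely many positive roots, and by Gabriel's theorem $M\mapsto\dim M$ is a bijection from the indecomposable $\Lambda$-modules onto the positive roots; in particular $\Lambda$ is representation-finite. Gabriel's theorem further yields that each such $M$ is a brick, so $\dim_{K}\End_{\Lambda}(M)=1$, while $q(\dim M)=1$ because $\dim M$ is a root of a positive definite form; the displayed identity then forces $\Ext_{\Lambda}^{1}(M,M)=0$, i.e. $M$ is rigid. The converse direction of the equivalence is the contrapositive of the first assertion of part (2).

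For part (2), assume $Q$ is connected, acyclic and not Dynkin, so that $\Lambda$ is not representation-finite. The preprojective component of the Auslander--Reiten quiver is then infinite, and every preprojective module $\tau^{-n}P_{i}$ is rigid (a standard consequence of the absence of cycles in that component); this already produces infinitely many isomorphism classes of indecomposable rigid modules. To produce a non-rigid indecomposable I would use that $q$ is no longer positive definite: a connected non-Dynkin graph contains a Euclidean full subgraph, whose null root $\delta$ satisfies $q(\delta)=0$ and is the dimension vector of indecomposable regular modules (the quasi-simple modules of the homogeneous tubes, for which $\tau M\cong M$). For such an $M$ the identity gives $\dim_{K}\Ext_{\Lambda}^{1}(M,M)=\dim_{K}\End_{\Lambda}(M)-0\ge 1$, so $M$ is not rigid; concretely, the almost split sequence $0\to M\to E\to M\to 0$ is a nonsplit self-extension.

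The deep inputs here are exactly the classical ones---Gabriel's bijection between indecomposables and positive roots together with the brick property in the Dynkin case, and the existence and shape of the preprojective component and the tubes in the non-Dynkin case---so the only genuinely new work is the elementary Euler-form bookkeeping. The main obstacle I anticipate is the last step: guaranteeing an indecomposable with a prescribed dimension vector of nonpositive Tits value. In the Euclidean case this is clean via the regular tubes, but to cover every non-Dynkin $Q$ uniformly one must either restrict to a Euclidean subquiver and transport the module back along the corresponding idempotent (cf. Lemma \ref{2.3}), or invoke Kac's theorem that every positive root, real or imaginary, is realized by an indecomposable module.
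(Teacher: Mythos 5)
First, note that the paper does not prove Proposition \ref{2.5} at all: it is quoted from Assem--Simson--Skowro\'nski \cite[VII.5.10, VIII.2.7 and VIII.2.9]{ASS}, so there is no internal argument to compare against; what you have written is a reconstruction from Gabriel's theorem, the Euler form, and Auslander--Reiten/Kac theory. Your ingredients are individually correct (the Euler-form identity, brickness of indecomposables over Dynkin quivers, rigidity of preprojective modules, non-rigidity at a vector of nonpositive Tits value), but the logical architecture is circular. You prove ``$Q$ Dynkin $\Rightarrow$ $\Lambda$ representation-finite'' directly, and then declare that the converse ``is the contrapositive of the first assertion of part (2)''. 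But your proof of part (2) opens with ``assume $Q$ is connected, acyclic and not Dynkin, \emph{so that} $\Lambda$ is not representation-finite'': that step uses the implication ``non-Dynkin $\Rightarrow$ representation-infinite'' --- exactly the converse of (1) that you postponed to part (2) --- in order to conclude that the preprojective component is infinite and thus supplies infinitely many rigid indecomposables. As written, the converse of (1) rests on (2), and (2) rests on the converse of (1).

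The gap is repairable with tools you already invoke, but it must be closed explicitly. Either cite Gabriel's theorem in its full strength (representation-finite $\Leftrightarrow$ Dynkin, which is precisely the content of \cite[VII.5.10]{ASS}), so that in part (2) the hypothesis of representation-infiniteness legitimately yields an infinite preprojective component; or first prove ``non-Dynkin $\Rightarrow$ representation-infinite'' independently, for instance by the route you sketch at the end: a connected non-Dynkin quiver admits a positive imaginary root $d$ (the null root $\delta$ of a Euclidean subgraph in the tame case, a positive vector with $q(d)<0$ in general), and Kac's theorem produces an infinite family of pairwise nonisomorphic indecomposables with dimension vector $d$; representation-infiniteness follows, and only then does your preprojective argument give the infinitely many rigid indecomposables, while the Euler-form computation at $d$ gives the non-rigid one. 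Finally, your own worry about Euclidean \emph{full} subquivers is justified: the $m$-Kronecker quiver with $m\ge 3$ arrows (which actually occurs as a separated quiver in this paper, see the Remark after Proposition \ref{2.7}) is connected, acyclic and non-Dynkin but contains no Euclidean full subquiver, so the idempotent-restriction route does not cover all cases; the Kac-theorem route is the one that works uniformly.
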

Immediately, we have the following characterization of $\tau$-rigid-finiteness for path algebras of acyclic quivers.
\begin{theorem}\label{2.6}
Let $Q$ be a connected acyclic quiver and $\Lambda:=KQ$ the path algebra of $Q$.
Then the following are equivalent:
\begin{itemize}
\item[(1)] $\Lambda$ is representation-finite.
\item[(2)] $\Lambda$ is $\tau$-rigid-finite.
\item[(3)] $Q$ is a Dynkin quiver.
\end{itemize}
\end{theorem}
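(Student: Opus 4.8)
The plan is to deduce all three equivalences from Proposition \ref{2.5}, using the one structural fact that distinguishes path algebras: the observation recorded just after Definition \ref{2.1} that over a hereditary algebra the notions of rigid and $\tau$-rigid module coincide. Since $\Lambda=KQ$ is hereditary, an indecomposable $\Lambda$-module $X$ satisfies $\Hom_{\Lambda}(X,\tau X)=0$ if and only if $\Ext_{\Lambda}^{1}(X,X)=0$. Consequently $\trigid\Lambda$ is exactly the set of isomorphism classes of indecomposable rigid $\Lambda$-modules, and this identification converts every statement about rigid modules in Proposition \ref{2.5} into a statement about $\tau$-rigid modules at no extra cost.

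With this reduction in hand, the equivalence $(1)\Leftrightarrow(3)$ is precisely Proposition \ref{2.5}(1), so only the place of $(2)$ among the three conditions remains. For $(1)\Rightarrow(2)$ I would argue directly: if $\Lambda$ is representation-finite there are only finitely many isomorphism classes of indecomposable $\Lambda$-modules, hence a fortiori only finitely many indecomposable $\tau$-rigid ones, which is exactly $\tau$-rigid-finiteness. For $(2)\Rightarrow(1)$ I would argue by contraposition. Assuming $\Lambda$ is not representation-finite, Proposition \ref{2.5}(2) supplies infinitely many isomorphism classes of indecomposable rigid $\Lambda$-modules; by the hereditary identification above these are precisely the indecomposable $\tau$-rigid modules, so $\trigid\Lambda$ is infinite and $\Lambda$ is not $\tau$-rigid-finite.

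I do not expect any genuine obstacle here: the entire content is furnished by Proposition \ref{2.5}, and the only subtlety is remembering that the coincidence of rigidity with $\tau$-rigidity is an artifact of hereditariness (and so is legitimately available for $\Lambda=KQ$ but not in the general setting of the paper). The proof is therefore essentially a bookkeeping argument threading $(3)\Leftrightarrow(1)\Rightarrow(2)\Rightarrow(1)$, with the hereditary reduction doing all the real work.
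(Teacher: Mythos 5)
Your proposal is correct and follows essentially the same route as the paper: the paper's proof is the one-line observation that, since $\Lambda$ is hereditary, rigid and $\tau$-rigid modules coincide, whence everything follows from Proposition \ref{2.5}. Your write-up merely makes explicit the bookkeeping ($(1)\Leftrightarrow(3)$ from Proposition \ref{2.5}(1), $(1)\Rightarrow(2)$ trivially, $(2)\Rightarrow(1)$ by contraposition via Proposition \ref{2.5}(2)) that the paper leaves to the reader.
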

\begin{proof}
It follows from Proposition \ref{2.5} 
because rigid modules are exactly $\tau$-rigid modules for any hereditary algebra.
\end{proof}
%%%%%%%%%%%%%%%%%%%%%%%%%%%%%%%%%%%%%%%%%%%%%%%%%%%%%%%%%%%%%%%%%%%%%%%%%%%%%%%%%%%%%%%%%%%%%%%%%%%%%%%%%%%%%%%%%%%%%%%%%%%%%%%%%%%%%%%%%%%%%%%%%%%%%%%%%%%
%%%%%%%%%%%%%%%%%%%%%%%%%%%%%%%%%%%%%%%%%%%%%%%%%%%%%%%%%%%%%%%%%%%%%%%%%%%%%%%%%%%%%%%%%%%%%%%%%%%%%%%%%%%%%%%%%%%%%%%%%%%%%%%%%%%%%%%%%%%%%%%%%%%%%%%%%%%
%%%%%%%%%%%%%%%%%%%%%%%%%%%%%%%%%%%%%%%%%%%%%%%%%%%%%%%%%%%%%%%%%%%%%%%%%%%%%%%%%%%%%%%%%%%%%%%%%%%%%%%%%%%%%%%%%%%%%%%%%%%%%%%%%%%%%%%%%%%%%%%%%%%%%%%%%%%
%%%%%%%%%%%%%%%%%%%%%%%%%%%%%%%%%%%%%%%%%%%%%%%%%%%%%%%%%%%%%%%%%%%%%%%%%%%%%%%%%%%%%%%%%%%%%%%%%%%%%%%%%%%%%%%%%%%%%%%%%%%%%%%%%%%%%%%%%%%%%%%%%%%%%%%%%%%
%%%%%%%%%%%%%%%%%%%%%%%%%%%%%%%%%%%%%%%%%%%%%%%%%%%%%%%%%%%%%%%%%%%%%%%%%%%%%%%%%%%%%%%%%%%%%%%%%%%%%%%%%%%%%%%%%%%%%%%%%%%%%%%%%%%%%%%%%%%%%%%%%%%%%%%%%%%
\subsection{Algebras with radical square zero}\label{subsec2.2}
Throughout this subsection, we assume that $\Lambda$ is an algebra with radical square zero ({\it i.e.}, $J^{2}=0$).
For algebras with radical square zero, the following triangular matrix algebra plays an important role:
\begin{align}
\Ma :=\Mat :=
\left[\begin{smallmatrix}
\Lambda/J&J\\
0&\Lambda/J
\end{smallmatrix}\right].\notag
\end{align}
Each $\Ma$-module is given by a triplet $(X^{\prime},X^{\prime\prime};\varphi)$, 
where $X^{\prime}, X^{\prime\prime}$ are $(\Lambda/J)$-modules and $\varphi$ is a morphism
\begin{align}
\varphi:X^{\prime}\otimes_{\Lambda/J}J\xto{}X^{\prime\prime}\notag
\end{align} 
in $\mod(\Lambda/J)$. 
A morphism $f:(X^{\prime},X^{\prime\prime};\varphi)\xto{}(Y^{\prime},Y^{\prime\prime};\psi)$ in $\mod\Ma$ is given by a pair $(f^{\prime},f^{\prime\prime})$, 
where $f^{\prime}:X^{\prime}\xto{}Y^{\prime}$ and $f^{\prime\prime}:X^{\prime\prime}\xto{}Y^{\prime\prime}$ are morphisms in $\mod(\Lambda/J)$ 
such that $\psi f^{\prime}=f^{\prime\prime}\varphi$ (see \cite[A.2.7]{ASS} and \cite[III.2]{ARS} for details).
\begin{align}
\xymatrix{
X^{\prime}\otimes_{\Lambda/J}J\ar[r]^{\ \ \ \ \varphi}\ar[d]_{f^{\prime}\otimes J}&X^{\prime\prime}\ar[d]^{f^{\prime\prime}}\\
Y^{\prime}\otimes_{\Lambda/J}J\ar[r]^{\ \ \ \ \psi}&Y^{\prime\prime}}\notag
\end{align}

We recall some properties of the triangular matrix algebra $\Ma$.
Let $Q=(Q_{\ver},Q_{\arr})$ be a quiver, where $Q_{\ver}$ is the vertex set and $Q_{\arr}$ is the arrow set.
Then we define a new quiver $Q^{\s}=(Q_{\ver}^{\s},Q_{\arr}^{\s})$, called a {\it separated quiver}, as follows:
Let $Q^{+}_{\ver}:=\{ i^{+}\mid i\in Q_{\ver}\}$ and  $Q^{-}_{\ver}:=\{ i^{-}\mid i\in Q_{\ver}\}$ be copies of $Q_{\ver}$. 
Then
\begin{align}
Q_{\ver}^{\s}:=Q_{\ver}^{+}\coprod Q_{\ver}^{-},\ \ Q_{\arr}^{\s}:=\{ i^{+}\rightarrow j^{-} \mid i\rightarrow j\ \textnormal{in}\  Q_{\arr}\}. \notag
\end{align}
Note that the separated quiver $Q^{\s}$ is not necessarily connected even if $Q$ is connected.
For example, the separated quiver $Q^{\s}$ of the following quiver $Q$ is not connected:
\begin{align}
\begin{xy}
(-20,0) *{Q=}="Z",
(30,0) *{Q^{\s}=}="Z",
(0,0) *+{\begin{smallmatrix}0\end{smallmatrix}}="A",
(0,15) *+{\begin{smallmatrix}1\end{smallmatrix}}="B",
(14.3,4.6) *+{\begin{smallmatrix}2\end{smallmatrix}}="C",
(8.9,-12.1) *+{\begin{smallmatrix}3\end{smallmatrix}}="D",
(-8.9,-12.1) *+{\begin{smallmatrix}4\end{smallmatrix}}="E",
(-14.3,4.6) *+{\begin{smallmatrix}5\end{smallmatrix}}="F",
(50,8) *+{\hspace{2mm}\begin{smallmatrix}0^{+}\end{smallmatrix}}="G",
(40,-8) *{\hspace{2mm}\begin{smallmatrix}1^{-}\end{smallmatrix}}="H",
(45,-8) *{\hspace{2mm}\begin{smallmatrix}2^{-}\end{smallmatrix}}="I",
(50,-8) *{\hspace{2mm}\begin{smallmatrix}3^{-}\end{smallmatrix}}="J",
(55,-8) *{\hspace{2mm}\begin{smallmatrix}4^{-}\end{smallmatrix}}="K",
(60,-8) *{\hspace{2mm}\begin{smallmatrix}5^{-}\end{smallmatrix}}="L",
(80,-8) *{\hspace{2mm}\begin{smallmatrix}0^{-}\end{smallmatrix}}="M",
(70,8) *+{\hspace{2mm}\begin{smallmatrix}1^{+}\end{smallmatrix}}="N",
(75,8) *+{\hspace{2mm}\begin{smallmatrix}2^{+}\end{smallmatrix}}="O",
(80,8) *+{\hspace{2mm}\begin{smallmatrix}3^{+}\end{smallmatrix}}="P",
(85,8) *+{\hspace{2mm}\begin{smallmatrix}4^{+}\end{smallmatrix}}="Q",
(90,8) *+{\hspace{2mm}\begin{smallmatrix}5^{+}\end{smallmatrix}}="R",
\ar@/^-4mm/ "A";"B"
\ar@/^-4mm/ "B";"A"
\ar@/^-4mm/ "A";"C"
\ar@/^-4mm/ "C";"A"
\ar@/^-4mm/ "A";"D"
\ar@/^-4mm/ "D";"A"
\ar@/^-4mm/ "A";"E"
\ar@/^-4mm/ "E";"A"
\ar@/^-4mm/ "A";"F"
\ar@/^-4mm/ "F";"A"
\ar "G";"H"
\ar "G";"I"
\ar "G";"J"
\ar "G";"K"
\ar "G";"L"
\ar "N";"M"
\ar "O";"M"
\ar "P";"M"
\ar "Q";"M"
\ar "R";"M"
\end{xy}\notag
\end{align}
We call a quiver {\it bipartite} if each vertex is either a sink or a source.
%%%%%%%%%%%%%%%%%%%%%%%%%%%%%%%%%%%%%%%%%%%%%%%%%%%%%%%%%%%%%%%%%%%%%%%%%%%%%%%%%%%%%%%%%%%%%%%%%%%%%%%%%%%%%%%%%%%%%%%%%%%%%%%%%%%%%%%%%%%%%%%%%%%%%%%%%%%
\begin{proposition}\cite[III.2.5]{ARS}
Let $Q$ be the quiver of $\Lambda$.
The following hold.
\begin{itemize}
\item[(1)] The separated quiver $Q^{\s}$ is bipartite.
\item[(2)] The algebra $\Ma$ is isomorphic to the path algebra of $Q^{\s}$. 
In particular, $\Ma$ is a hereditary algebra with radical square zero.
\item[(3)] Each simple $\Ma$-module is one of the form $(S,0;0)$ or $(0,S;0)$, where $S$ is a simple $\Lambda$-module.
\item[(4)] Each indecomposable projective $\Ma$-module is one of the form $(P/PJ, PJ ; 1_{PJ})$ or $(0,P/PJ;0)$, 
where $P$ is an indecomposable projective $\Lambda$-module.
\end{itemize}
\end{proposition}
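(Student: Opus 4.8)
The plan is to analyze the triangular matrix algebra $\Ma$ through its Gabriel quiver and then read off each of the four statements. First I would record the basic structure of $\Ma$. Since $\Lambda/J$ is semisimple its radical vanishes, so the Jacobson radical of $\Ma$ is $J_{\Ma}=\left[\begin{smallmatrix}0&J\\0&0\end{smallmatrix}\right]$, and a direct matrix computation gives $J_{\Ma}^{2}=0$; in particular $\Ma$ has radical square zero. A complete set of primitive orthogonal idempotents is $\{e_{i}^{+},e_{i}^{-}\mid i\in Q_{\ver}\}$, where $e_{i}^{+}=\left[\begin{smallmatrix}\bar e_{i}&0\\0&0\end{smallmatrix}\right]$, $e_{i}^{-}=\left[\begin{smallmatrix}0&0\\0&\bar e_{i}\end{smallmatrix}\right]$, and $\bar e_{i}$ is the primitive idempotent of $\Lambda/J$ at the vertex $i$. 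I would label the vertex of the Gabriel quiver of $\Ma$ attached to $e_{i}^{\pm}$ by $i^{\pm}$, matching the notation $Q^{\s}_{\ver}$.

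For (1) and (2) the key computation is to identify the arrows of $\Ma$. Because $J_{\Ma}^{2}=0$, the arrows correspond to a basis of $J_{\Ma}/J_{\Ma}^{2}=J_{\Ma}\cong J$ as a bimodule over $\Ma/J_{\Ma}\cong(\Lambda/J)\times(\Lambda/J)$. Evaluating $e_{i}^{+}J_{\Ma}e_{j}^{-}=\left[\begin{smallmatrix}0&\bar e_{i}J\bar e_{j}\\0&0\end{smallmatrix}\right]$, while $e_{i}^{+}J_{\Ma}e_{j}^{+}=0$ and $e_{i}^{-}J_{\Ma}e_{j}^{\pm}=0$, I see that every arrow of $\Ma$ starts at some $i^{+}$ and ends at some $j^{-}$, and that the arrows $i^{+}\to j^{-}$ are in bijection with a basis of $\bar e_{i}(J/J^{2})\bar e_{j}$, i.e. with the arrows $i\to j$ of $Q$. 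This is exactly the definition of $Q^{\s}$, so the Gabriel quiver of $\Ma$ is $Q^{\s}$; since all arrows go from a $+$-vertex to a $-$-vertex, every $i^{+}$ is a source and every $i^{-}$ a sink, which gives (1). Moreover $Q^{\s}$ is acyclic and contains no path of length $\geq 2$, so its path algebra already has radical square zero and needs no relations; hence $\Ma\cong KQ^{\s}$, which is hereditary because $Q^{\s}$ is acyclic. This proves (2).

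For (3) and (4) I would pass to the triplet description $(X',X'';\varphi)$ of $\mod\Ma$. A simple module sits at a single vertex, so either $X'$ is simple and $X''=0$ (forcing $\varphi=0$), or $X'=0$ and $X''$ is simple (again $\varphi=0$); since the simple $(\Lambda/J)$-modules are precisely the simple $\Lambda$-modules $S$, this yields the two families $(S,0;0)$ and $(0,S;0)$ and gives (3). For (4) I would compute the projectives $e_{i}^{\pm}\Ma$ directly as top rows of the matrix: $e_{i}^{+}\Ma=\left[\begin{smallmatrix}\bar e_{i}(\Lambda/J)&\bar e_{i}J\\0&0\end{smallmatrix}\right]$ reads as the triplet $(P/PJ,PJ;1_{PJ})$ for $P=e_{i}\Lambda$ the indecomposable projective $\Lambda$-module at $i$, using $\bar e_{i}(\Lambda/J)=P/PJ$, $\bar e_{i}J=PJ$ and the canonical identification $(P/PJ)\otimes_{\Lambda/J}J\cong PJ$, while $e_{i}^{-}\Ma=\left[\begin{smallmatrix}0&0\\0&\bar e_{i}(\Lambda/J)\end{smallmatrix}\right]$ reads as $(0,P/PJ;0)$. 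This proves (4).

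The genuinely delicate point is not any single estimate but the bookkeeping in the second paragraph: fixing orientation conventions so that the idempotents $e_{i}^{\pm}$ match the labels $i^{\pm}$, and verifying that the identification of arrows of $\Ma$ with arrows of $Q$ is compatible with the equivalence between $\mod\Ma$ and the category of triplets, so that the abstract isomorphism $\Ma\cong KQ^{\s}$ is the one making (3) and (4) come out in the stated normal form. Everything else is routine matrix multiplication.
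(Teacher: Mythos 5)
Your proof is correct. There is nothing in the paper to compare it against: the proposition is stated with a citation to \cite[III.2.5]{ARS} and no proof is given, so the only benchmark is the classical argument, and yours is essentially it, specialized to the paper's standing setting of a basic algebra over an algebraically closed field. The core is exactly as you say: compute $J_{\Ma}=\left[\begin{smallmatrix}0&J\\0&0\end{smallmatrix}\right]$, note $J_{\Ma}^{2}=0$, identify the primitive idempotents $e_{i}^{\pm}$, and observe that the only nonzero spaces of the form $e_{u}J_{\Ma}e_{v}$ are $e_{i}^{+}J_{\Ma}e_{j}^{-}=\bar{e}_{i}J\bar{e}_{j}$; hence the Gabriel quiver of $\Ma$ is $Q^{\s}$, and since $Q^{\s}$ has no paths of length two, no nonzero admissible ideal can occur, whence $\Ma\simeq KQ^{\s}$, after which (3) and (4) are read off in triplet form. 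Two points deserve the emphasis you gave them. First, the identification of the Gabriel quiver with $Q^{\s}$ (rather than the quiver with the roles of $i^{+}$ and $i^{-}$ interchanged) depends on the right-module convention of \cite{ASS} that arrows $i\to j$ correspond to a basis of $\bar{e}_{i}(J/J^{2})\bar{e}_{j}$; with the opposite convention the normal forms in (3) and (4) would come out attached to the wrong vertices, so this bookkeeping is genuinely the delicate step, and you resolved it correctly. Second, your appeal to the presentation of a basic algebra as $KQ/I$ with $I$ admissible uses that $K$ is algebraically closed; this is a standing assumption of the paper (whereas \cite{ARS} treats general Artin algebras in a slightly different formulation), and if one wants to avoid invoking that theorem, the surjection $KQ^{\s}\to\Ma$ determined by your idempotents and arrow bases is an isomorphism by the dimension count $\dim_{K}\Ma=2|Q_{\ver}|+|Q_{\arr}|=\dim_{K}KQ^{\s}$.
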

%%%%%%%%%%%%%%%%%%%%%%%%%%%%%%%%%%%%%%%%%%%%%%%%%%%%%%%%%%%%%%%%%%%%%%%%%%%%%%%%%%%%%%%%%%%%%%%%%%%%%%%%%%%%%%%%%%%%%%%%%%%%%%%%%%%%%%%%%%%%%%%%%%%%%%%%%%%
Now we recall results on representation theory of algebras with radical square zero.
We define a functor $F:\mod \Lambda\xto{} \mod\Ma$ as follows:
For any $\Lambda$-module $X$, we let 
\begin{align}
F(X):=(X/XJ, XJ; \varphi_{X}),\notag
\end{align} 
where the map $\varphi_{X}:X/XJ\otimes_{\Lambda/J} J\xto{} XJ$ is naturally induced by 
the natural multiplication morphism $X\otimes_{\Lambda} J\xto{} XJ$ since $J^{2}=0$.
For any morphism $g:X\xto{} Y$, we let 
\begin{align}
F(g):=(g^{\prime},g^{\prime\prime}), \notag
\end{align} 
where $g^{\prime}:X/XJ\xto{} Y/YJ$ is induced by $g$ and $g^{\prime\prime}:XJ\xto{} YJ$ is the restriction to $XJ$.
\begin{proposition}\cite[X.2.1, X.2.2, X.2.4 and X.2.6]{ARS}\label{2.7}
The following hold.
\begin{itemize}
\item[(1)] The functor $F$ is full and induces an equivalence of categories $\smod\Lambda\rightarrow \smod\Ma$.
\item[(2)] A $\Lambda$-module $X$ is indecomposable (respectively, projective) if and only if $FX$ is an indecomposable (respectively, a projective) $\Ma$-module.
\item[(3)] The following are equivalent:
\begin{itemize}
\item[(a)] $\Lambda$ is representation-finite. 
\item[(b)] The separated quiver of the quiver for $\Lambda$ is a disjoint union of Dynkin quivers.
\end{itemize}
\end{itemize}
\end{proposition}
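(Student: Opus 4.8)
The plan is to establish (1) first, since (2) and (3) will then follow formally from the stable equivalence together with the classification of simple and projective $\Ma$-modules recorded just above. For (1), I would begin by checking that $F$ is a well-defined additive functor. The only point requiring care is that, because $J^{2}=0$, the module $XJ$ is annihilated by $J$ and hence is a $(\Lambda/J)$-module, and the multiplication $X\otimes_{\Lambda}J\to XJ$ factors through $X/XJ\otimes_{\Lambda/J}J$, so that $\varphi_{X}$ is well defined; dually, a $\Lambda$-linear $g\colon X\to Y$ restricts to $g''\colon XJ\to YJ$ and descends to $g'\colon X/XJ\to Y/YJ$ compatibly with the $\varphi$'s, giving $F(g)=(g',g'')$.

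The heart of the argument is to show that $F$ is full and that the induced functor $\smod\Lambda\to\smod\Ma$ is faithful and dense. For fullness, given a morphism $(f',f'')\colon FX\to FY$ in $\mod\Ma$, I would exploit that $X/XJ$ is a semisimple $(\Lambda/J)$-module to lift $f'$ through the projective covers of $X$ and $Y$, and then use the commutativity $\psi\circ(f'\otimes J)=f''\circ\varphi_{X}$ to correct this lift into a genuine $g\colon X\to Y$ with $F(g)=(f',f'')$. For faithfulness of the induced functor I would identify the kernel ideal: a map becomes zero in $\smod\Lambda$ precisely when it factors through a projective, and since $J^{2}=0$ this happens exactly when its image under $F$ factors through a projective $\Ma$-module, i.e.\ vanishes in $\smod\Ma$; the nontrivial direction here uses the description of the indecomposable projective $\Ma$-modules as $(P/PJ,PJ;1_{PJ})$ and $(0,S;0)$. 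Density up to projectives is then obtained by reconstructing, from an arbitrary $\Ma$-module $(X',X'';\varphi)$, a $\Lambda$-module whose image under $F$ agrees with it after discarding summands of the form $(0,S;0)$, which are projective and hence zero in $\smod\Ma$.

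Granting (1), part (2) is essentially formal. An object is zero in the stable category exactly when it is projective, so $X$ is projective if and only if $FX$ is zero in $\smod\Ma$, that is, $FX$ is projective; here one invokes the classification above to see that being stably zero coincides with genuine projectivity of $FX$. Since a stable equivalence preserves indecomposability of non-projective objects, $X$ is indecomposable and non-projective if and only if $FX$ is; the remaining projective case is read off directly from $FP=(P/PJ,PJ;1_{PJ})$, which is indecomposable precisely when $P$ is.

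Finally, (3) follows by counting indecomposables. Both $\Lambda$ and $\Ma$ have only finitely many indecomposable projectives, so $\Lambda$ is representation-finite if and only if it has finitely many non-projective indecomposables, which by (1) and (2) holds if and only if the same is true for $\Ma=KQ^{\s}$, i.e.\ if and only if $KQ^{\s}$ is representation-finite. By Gabriel's theorem for path algebras (Proposition \ref{2.5}), this is equivalent to $Q^{\s}$ being a disjoint union of Dynkin quivers. I expect the main obstacle to be the proof of fullness together with the precise identification of the projectively-trivial ideal in step (1); once the stable equivalence is in place, parts (2) and (3) are bookkeeping.
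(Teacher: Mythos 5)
The paper offers no proof of this proposition---it is quoted verbatim from \cite[X.2.1, X.2.2, X.2.4 and X.2.6]{ARS}---and your sketch is a correct reconstruction of exactly that standard argument: fullness of $F$ by lifting through projective covers and correcting via the compatibility square, identification of the maps killed in the stable categories, and density up to projective summands of the form $(0,S;0)$, followed by the formal deduction of (2) and (3). The only step worth tightening is the indecomposability claim in (2): a stable equivalence controls $FX$ only up to projective direct summands, so to conclude that $FX$ itself is indecomposable you should either rule out summands $(0,S;0)$ and $F(P)$ of $FX$ (using $\Hom_{\Ma}(FX,(0,S;0))=0$, which holds since $\varphi_{X}$ is surjective, together with idempotent lifting), or, more directly, observe that fullness of $F$ makes $\End_{\Ma}(FX)$ a nonzero quotient of the local ring $\End_{\Lambda}(X)$, hence local.
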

%%%%%%%%%%%%%%%%%%%%%%%%%%%%%%%%%%%%%%%%%%%%%%%%%%%%%%%%%%%%%%%%%%%%%%%%%%%%%%%%%%%%%%%%%%%%%%%%%%%%%%%%%%%%%%%%%%%%%%%%%%%%%%%%%%%%%%%%%%%%%%%%%%%%%%%%%%%
\begin{remark}
Clearly a stable equivalence preserves representation-finiteness. 
However a stable equivalence does not preserve $\tau$-rigid-finiteness in general.
Indeed, let $\Lambda$ be an algebra with radical square zero whose quiver consists of one vertex and $n$ loops with $n\ge 2$.
Since $\Lambda$ is local, every indecomposable $\tau$-rigid $\Lambda$-module is projective, and in particular $\Lambda$ is $\tau$-rigid-finite.
On the other hand, since the separated quiver is the $n$-Kronecker quiver
\begin{align}
\xymatrix{
\circ\ar@<2.6mm>[r]\ar@<-2.4mm>[r]^{{\tiny\vdots}}&\circ
},\notag
\end{align}
$\Ma$ is not $\tau$-rigid-finite by Theorem \ref{2.6}.
\end{remark}
%%%%%%%%%%%%%%%%%%%%%%%%%%%%%%%%%%%%%%%%%%%%%%%%%%%%%%%%%%%%%%%%%%%%%%%%%%%%%%%%%%%%%%%%%%%%%%%%%%%%%%%%%%%%%%%%%%%%%%%%%%%%%%%%%%%%%%%%%%%%%%%%%%%%%%%%%%%
%%%%%%%%%%%%%%%%%%%%%%%%%%%%%%%%%%%%%%%%%%%%%%%%%%%%%%%%%%%%%%%%%%%%%%%%%%%%%%%%%%%%%%%%%%%%%%%%%%%%%%%%%%%%%%%%%%%%%%%%%%%%%%%%%%%%%%%%%%%%%%%%%%%%%%%%%%%
%%%%%%%%%%%%%%%%%%%%%%%%%%%%%%%%%%%%%%%%%%%%%%%%%%%%%%%%%%%%%%%%%%%%%%%%%%%%%%%%%%%%%%%%%%%%%%%%%%%%%%%%%%%%%%%%%%%%%%%%%%%%%%%%%%%%%%%%%%%%%%%%%%%%%%%%%%%
%%%%%%%%%%%%%%%%%%%%%%%%%%%%%%%%%%%%%%%%%%%%%%%%%%%%%%%%%%%%%%%%%%%%%%%%%%%%%%%%%%%%%%%%%%%%%%%%%%%%%%%%%%%%%%%%%%%%%%%%%%%%%%%%%%%%%%%%%%%%%%%%%%%%%%%%%%%
%%%%%%%%%%%%%%%%%%%%%%%%%%%%%%%%%%%%%%%%%%%%%%%%%%%%%%%%%%%%%%%%%%%%%%%%%%%%%%%%%%%%%%%%%%%%%%%%%%%%%%%%%%%%%%%%%%%%%%%%%%%%%%%%%%%%%%%%%%%%%%%%%%%%%%%%%%%
%%%%%%%%%%%%%%%%%%%%%%%%%%%%%%%%%%%%%%%%%%%%%%%%%%%%%%%%%%%%%%%%%%%%%%%%%%%%%%%%%%%%%%%%%%%%%%%%%%%%%%%%%%%%%%%%%%%%%%%%%%%%%%%%%%%%%%%%%%%%%%%%%%%%%%%%%%%
%%%%%%%%%%%%%%%%%%%%%%%%%%%%%%%%%%%%%%%%%%%%%%%%%%%%%%%%%%%%%%%%%%%%%%%%%%%%%%%%%%%%%%%%%%%%%%%%%%%%%%%%%%%%%%%%%%%%%%%%%%%%%%%%%%%%%%%%%%%%%%%%%%%%%%%%%%%
%%%%%%%%%%%%%%%%%%%%%%%%%%%%%%%%%%%%%%%%%%%%%%%%%%%%%%%%%%%%%%%%%%%%%%%%%%%%%%%%%%%%%%%%%%%%%%%%%%%%%%%%%%%%%%%%%%%%%%%%%%%%%%%%%%%%%%%%%%%%%%%%%%%%%%%%%%%
%%%%%%%%%%%%%%%%%%%%%%%%%%%%%%%%%%%%%%%%%%%%%%%%%%%%%%%%%%%%%%%%%%%%%%%%%%%%%%%%%%%%%%%%%%%%%%%%%%%%%%%%%%%%%%%%%%%%%%%%%%%%%%%%%%%%%%%%%%%%%%%%%%%%%%%%%%%
%%%%%%%%%%%%%%%%%%%%%%%%%%%%%%%%%%%%%%%%%%%%%%%%%%%%%%%%%%%%%%%%%%%%%%%%%%%%%%%%%%%%%%%%%%%%%%%%%%%%%%%%%%%%%%%%%%%%%%%%%%%%%%%%%%%%%%%%%%%%%%%%%%%%%%%%%%%
\section{Main results}
Throughout this section, $\Lambda$ is an algebra with radical square zero ({\it i.e.}, $J^{2}=0$), and $\Ma,F$ as in Subsection \ref{subsec2.2}.
Let $Q$ be the quiver of $\Lambda$ and $Q^{\s}$ the separated quiver of $Q$.
A full subquiver $Q^{\prime}$ of $Q^{\s}$ is called a {\it single subquiver}
if, for any $i\in Q_{\ver}$, the vertex set $Q^{\prime}_{\ver}$ contains at most one of $i^{+}$ or $i^{-}$.
We denote by $\S$ the set of all single subquivers of $Q^{\s}$.

The following theorem is our main result of this paper.
\begin{theorem}\label{3.1}
Let $\Lambda$ be an algebra with radical square zero and $Q^{\s}$ the separated quiver of the quiver $Q$ for $\Lambda$. 
Then the following are equivalent:
\begin{itemize}
\item[(1)] $\Lambda$ is $\tau$-rigid-finite.
\item[(2)] Every single subquiver of $Q^{\s}$ is a disjoint union of Dynkin quivers.
\end{itemize}
\end{theorem}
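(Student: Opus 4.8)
The plan is to read off Theorem~\ref{3.1} from the bijection
\[
\trigid\Lambda\;\cong\;\coprod_{Q'\in\S^{+}}\trigid^{\m}KQ'
\]
of Corollary~\ref{3.5} (itself a consequence of the module-level correspondence in Theorem~\ref{3.2}), combined with the characterization of $\tau$-rigid-finiteness for path algebras of acyclic quivers in Theorem~\ref{2.6}. Since $Q^{\s}$ has finitely many vertices, the index set $\S^{+}$ is finite; hence $\trigid\Lambda$ is finite if and only if each $\trigid^{\m}KQ'$ with $Q'\in\S^{+}$ is finite. Throughout I use that every single subquiver is a full subquiver of the bipartite quiver $Q^{\s}$, and is therefore itself bipartite and in particular acyclic, so that Theorem~\ref{2.6} applies to its path algebra.

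For the implication (2)$\Rightarrow$(1) I would argue directly. Each $Q'\in\S^{+}$ is a connected single subquiver, hence by hypothesis a connected Dynkin quiver, so Theorem~\ref{2.6} gives that $KQ'$ is $\tau$-rigid-finite; thus $\trigid^{\m}KQ'\subseteq\trigid KQ'$ is finite. As $\S^{+}$ is finite, Corollary~\ref{3.5} presents $\trigid\Lambda$ as a finite disjoint union of finite sets, so $\Lambda$ is $\tau$-rigid-finite.

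For (1)$\Rightarrow$(2) I would argue contrapositively. If some single subquiver fails to be a disjoint union of Dynkin quivers, then one of its connected components $Q''$ is a connected, non-Dynkin single subquiver; by Theorem~\ref{2.6}, $\trigid KQ''$ is infinite. The key step is to transport this infinitude into a \emph{sincere} piece $\trigid^{\m}$. For $M\in\trigid KQ''$ let $V_{M}$ be the set of vertices supporting $P_{0}^{M}\oplus P_{1}^{M}$ and let $Q''|_{V_{M}}$ be the full subquiver on $V_{M}$, again a single subquiver. Because $Q''$ is bipartite, every path has length at most one, so the idempotent truncation at $V_{M}$ is the path algebra $K(Q''|_{V_{M}})$ of this full subquiver; by Lemma~\ref{2.3} the module $M$ lies in $\Im L_{e}$ for the corresponding idempotent $e$, and Proposition~\ref{2.4} identifies it with an indecomposable $\tau$-rigid module $Me$ over $K(Q''|_{V_{M}})$ which is sincere, i.e. an element of $\trigid^{\m}K(Q''|_{V_{M}})$. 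Assigning to $M$ the pair $(V_{M},Me)$ thus yields an injection
\[
\trigid KQ''\;\hookrightarrow\;\coprod_{V}\trigid^{\m}K(Q''|_{V}),
\]
where $V$ runs over the finitely many vertex subsets of $Q''$. As the source is infinite, some $\trigid^{\m}K(Q''|_{V})$ is infinite; the corresponding $Q''|_{V}$ is connected (it supports a sincere indecomposable) and has at least two vertices, hence lies in $\S^{+}$, so Corollary~\ref{3.5} forces $\trigid\Lambda$ to be infinite.

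The main obstacle is precisely this passage from $\trigid$ to $\trigid^{\m}$. Corollary~\ref{3.5} only records the \emph{sincere} indecomposables $\trigid^{\m}KQ'$, whereas Theorem~\ref{2.6} merely guarantees that the \emph{full} set $\trigid KQ''$ is infinite, and an individual indecomposable $\tau$-rigid module need not be sincere. What makes the reduction go through is that $Q^{\s}$, and hence every single subquiver, is bipartite: idempotent truncations of $KQ''$ are again path algebras of single subquivers, so the support decomposition $\trigid KQ''=\coprod_{V}\trigid^{\m}K(Q''|_{V})$ furnished by Proposition~\ref{2.4} and Lemma~\ref{2.3} never leaves the class of single subquivers and lets the infinitude land on a genuinely sincere, connected piece. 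Once this is in place, both implications follow formally from the finiteness of $\S^{+}$.
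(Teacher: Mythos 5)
Your proof is correct, but it inverts the paper's logical order and consequently does extra work that the paper's own argument avoids. The paper never invokes Corollary \ref{3.5}: inside the proof of Theorem \ref{3.1} it establishes a bijection $\trigid_{\np}\Lambda \longrightarrow \bigcup_{Q'\in\S}\trigid_{\np}(\Ma,Q')$ indexed by \emph{all} single subquivers $\S$ (not only the connected ones in $\S^{+}$), identifies each $\trigid(\Ma,Q')$ with $\trigid KQ'$ via Proposition \ref{2.4} and bipartiteness, and then obtains both implications simultaneously from Theorem \ref{2.6}: as $\S$ is finite, $\Lambda$ is $\tau$-rigid-finite iff every $KQ'$ is, iff every single subquiver is a disjoint union of Dynkin quivers. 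Working with the full sets $\trigid KQ'$ rather than the sincere parts $\trigid^{\m}KQ'$ is exactly what lets the paper sidestep the obstacle you identified, namely that Theorem \ref{2.6} only yields infinitude of $\trigid KQ''$ and not of any sincere piece. Your way around it --- the injection $\trigid KQ''\hookrightarrow \coprod_{V}\trigid^{\m}K(Q''|_{V})$ obtained by stratifying by the support of $P_{0}^{M}\oplus P_{1}^{M}$, using bipartiteness so that idempotent truncations remain path algebras of single subquivers --- is correct (connectedness of $Q''|_{V}$ and the two-vertex bound both follow as you indicate), and it is essentially the same decomposition the paper uses afterwards when it deduces Corollary \ref{3.5} from the proof of Theorem \ref{3.1}. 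One point you should make explicit: your argument is non-circular only because Corollary \ref{3.5} depends just on Theorem \ref{3.2}, Lemma \ref{2.3} and Proposition \ref{2.4}, not on the statement of Theorem \ref{3.1}; since the paper presents that corollary as a byproduct of this very proof, a self-contained write-up along your lines would have to prove Corollary \ref{3.5} first, at which point the paper's shorter direct route is already available.
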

%%%%%%%%%%%%%%%%%%%%%%%%%%%%%%%%%%%%%%%%%%%%%%%%%%%%%%%%%%%%%%%%%%%%%%%%%%%%%%%%%%%%%%%%%%%%%%%%%%%%%%%%%%%%%%%%%%%%%%%%%%%%%%%%%%%%%%%%%%%%%%%%%%%%%%%%%%%
The proof of Theorem \ref{3.1} will be given in the rest of this section.
A key result is the following criterion for indecomposable $\Lambda$-modules to be $\tau$-rigid in terms of the triangular matrix algebra $\Ma$.
\begin{theorem}\label{3.2}
Let $X$ be a $\Lambda$-module.
The following are equivalent:
\begin{itemize}
\item[(1)] $X$ is a $\tau$-rigid $\Lambda$-module.
\item[(2)] $FX$ is a $\tau$-rigid $\Ma$-module and $\add P_{0}^{X}\cap \add P_{1}^{X}=0$.
\end{itemize}
\end{theorem}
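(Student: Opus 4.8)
The plan is to apply the surjectivity criterion of Proposition \ref{2.2}(1) on both sides and compare the two maps. First I would record the minimal projective presentation of $FX$ over $\Ma$. Since $J^{2}=0$ the radical $\rad P_0^X=P_0^XJ$ is semisimple, so the syzygy $\Omega X:=\Im p\subseteq P_0^XJ$ is semisimple and $\top P_1^X\cong\Omega X$. Computing tops in the triplet description gives $\varphi_X$ surjective, hence $\rad FX=(0,XJ;0)$ and $\top FX=(X/XJ,0;0)$, so $F(P_0^X)$ is the projective cover of $FX$ and the kernel of $F(q)\colon F(P_0^X)\to FX$ is the semisimple module $(0,\Omega X;0)$. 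As $\Ma$ is hereditary this kernel is projective, so
\begin{align}
0\to(0,\Omega X;0)\xto{\iota}F(P_0^X)\xto{F(q)}FX\to0\notag
\end{align}
is the minimal projective presentation of $FX$. The point to stress is the asymmetry: $F$ does \emph{not} carry the $\Lambda$-presentation to this $\Ma$-presentation, because $F(P_1^X)$ is an ``upper'' projective while $(0,\Omega X;0)$ is a ``lower'' one, and reconciling this is precisely the role the condition $\add P_0^X\cap\add P_1^X=0$ will play.

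Next I would compute the two surjectivity maps. In the triplet description a morphism $F(P_0^X)\to FX$ is determined by its top component, so $\Hom_{\Ma}(F(P_0^X),FX)\cong\Hom_{\Lambda/J}(\top P_0^X,\top X)$, while $\Hom_{\Ma}((0,\Omega X;0),FX)\cong\Hom_{\Lambda/J}(\Omega X,XJ)$; under these identifications the $\Ma$-side map $\delta:=(\iota,FX)$ sends $g'$ to $[\varphi_X\circ(g'\otimes J)]|_{\Omega X}$. On the $\Lambda$-side, since $\Im p\subseteq P_0^XJ$ every $hp$ has image in $XJ$, so $\Im(p,X)$ is contained in the subspace $\Hom_{\Lambda/J}(\Omega X,XJ)\subseteq\Hom_\Lambda(P_1^X,X)$, and the cokernel of this inclusion is $\Hom_{\Lambda/J}(\Omega X,\top X)$. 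The latter vanishes exactly when $\Omega X=\top P_1^X$ and $\top X=\top P_0^X$ share no simple summand, i.e.\ when $\add P_0^X\cap\add P_1^X=0$.

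Finally I would glue the two sides together. The map $\pi\colon\Hom_\Lambda(P_0^X,X)\to\Hom_{\Lambda/J}(\top P_0^X,\top X)$ induced on tops is surjective by projectivity of $P_0^X$, and a short multiplicativity computation — using that each $h$ is $\Lambda$-linear and that $\varphi_X$ is induced by multiplication — shows $h|_{\Omega X}=\delta(\pi(h))$, that is, the corestriction $(p,X)'\colon\Hom_\Lambda(P_0^X,X)\to\Hom_{\Lambda/J}(\Omega X,XJ)$ equals $\delta\circ\pi$. Since $\pi$ is surjective, $(p,X)'$ and $\delta$ have the same image, hence $(p,X)'$ is surjective if and only if $\delta$ is. Combining the three steps, $(p,X)$ is surjective if and only if $\add P_0^X\cap\add P_1^X=0$ and $\delta$ is surjective, which by Proposition \ref{2.2}(1) applied to $FX$ is exactly the condition that $\add P_0^X\cap\add P_1^X=0$ and $FX$ is $\tau$-rigid over $\Ma$; this is the claimed equivalence. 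I expect the main obstacle to be this last matching step — verifying the commuting triangle $(p,X)'=\delta\circ\pi$ while simultaneously isolating $\Hom_{\Lambda/J}(\Omega X,\top X)$ as the obstruction giving the $\add$-condition — since that is where the upper/lower asymmetry of the $\Ma$-projectives has to be handled correctly.
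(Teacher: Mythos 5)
Your proof is correct, and it shares its two load-bearing ingredients with the paper's argument: the identification of the minimal projective presentation of $FX$ over $\Ma$ (your sequence $0\to(0,\Omega X;0)\to FP_{0}^{X}\to FX\to 0$ is exactly Lemma \ref{3.4}, with $\Omega X\cong P_{1}^{X}/P_{1}^{X}J$ via $\tilde{p}$) and the surjectivity criterion of Proposition \ref{2.2}(1) applied on both sides. Where you genuinely differ is in how the two criteria are compared. The paper proves the implications separately, lifting one test morphism at a time: for (1)$\Rightarrow$(2) it quotes Proposition \ref{2.2}(2) for the add-condition and pushes a lift forward through the factorization of Lemma \ref{3.3}; for (2)$\Rightarrow$(1) it uses the add-condition to force $\Im f\subseteq XJ$ and invokes fullness of $F$ (Proposition \ref{2.7}(1)) to descend a lift back to $\mod\Lambda$. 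You instead linearize the whole problem: after identifying $\Hom_{\Ma}(FP_{0}^{X},FX)\simeq\Hom_{\Lambda/J}(\top P_{0}^{X},\top X)$ (legitimate because $\varphi_{P_{0}^{X}}$ is invertible) and $\Hom_{\Ma}((0,\Omega X;0),FX)\simeq\Hom_{\Lambda/J}(\Omega X,XJ)$, you factor $(p,X)$ as the corestriction $(p,X)'=\delta\circ\pi$ followed by the inclusion $\Hom_{\Lambda}(P_{1}^{X},XJ)\subseteq\Hom_{\Lambda}(P_{1}^{X},X)$, whose cokernel $\Hom_{\Lambda/J}(\Omega X,\top X)$ is precisely the obstruction encoded by the add-condition; both implications then drop out of one commutative triangle. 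This buys you three things: Proposition \ref{2.2}(2) is never needed (your cokernel computation in effect reproves it for radical square zero algebras); fullness of $F$ is replaced by the elementary surjectivity of $\pi$, which only needs projectivity of $P_{0}^{X}$; and the equivalence is established uniformly rather than in two separate passes. The paper's morphism-by-morphism version stays closer to the technique that generalizes beyond this setting, while yours makes explicit exactly which subspace of $\Hom_{\Lambda}(P_{1}^{X},X)$ is ever reachable by $(p,X)$ and why the two conditions in (2) are independent constraints.
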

%%%%%%%%%%%%%%%%%%%%%%%%%%%%%%%%%%%%%%%%%%%%%%%%%%%%%%%%%%%%%%%%%%%%%%%%%%%%%%%%%%%%%%%%%%%%%%%%%%%%%%%%%%%%%%%%%%%%%%%%%%%%%%%%%%%%%%%%%%%%%%%%%%%%%%%%%%%
We prove Theorem \ref{3.2} by comparing a minimal projective presentation of a $\Lambda$-module $X$ with that of the $\Ma$-module $FX$.
We start with an easy lemma.
\begin{lemma}\label{3.3}
Let $f:X\rightarrow Y$ be a nonzero morphism in $\mod\Lambda$.
If $\Im f$ is contained in $YJ$, then there exists a unique morphism $\tilde{f}: X/XJ\rightarrow YJ$ such that 
\begin{align}
f=(X\xto{\pi}X/XJ\xto{\tilde{f}}YJ\xto{\iota}Y), \notag
\end{align}
where $\pi$ and $\iota$ are natural morphisms.
\end{lemma}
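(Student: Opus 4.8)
The plan is to prove Lemma \ref{3.3} by exploiting the hypothesis $\Im f \subseteq YJ$ together with the fundamental fact that $J^2 = 0$, which forces the map $f$ to annihilate $XJ$. Concretely, I would first observe that since $\Im f \subseteq YJ$, the morphism $f$ factors through the inclusion $\iota: YJ \hookrightarrow Y$; that is, there is a morphism $g: X \to YJ$ with $f = \iota g$, and this $g$ is uniquely determined because $\iota$ is a monomorphism.

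The main step is then to show that $g$ itself factors through the natural surjection $\pi: X \to X/XJ$, which amounts to verifying that $g$ vanishes on $XJ$. Here is where $J^2 = 0$ enters: any element of $XJ$ has the form $xj$ with $x \in X$ and $j \in J$, and since $f$ is a morphism of $\Lambda$-modules, $g(xj) = f(xj) = f(x)j \in YJ \cdot J = YJ^2 = 0$. Thus $XJ \subseteq \Ker g$, and by the universal property of the quotient $\pi$, there exists a unique $\tilde{f}: X/XJ \to YJ$ with $g = \tilde{f}\pi$. Assembling these factorizations gives $f = \iota g = \iota \tilde{f} \pi$, which is exactly the claimed decomposition $f = \iota \tilde{f} \pi$.

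For uniqueness of $\tilde{f}$, I would note that $\pi$ is an epimorphism and $\iota$ is a monomorphism, so if $\iota \tilde{f}_1 \pi = \iota \tilde{f}_2 \pi$ then cancelling $\iota$ on the left (using that it is monic) and $\pi$ on the right (using that it is epic) yields $\tilde{f}_1 = \tilde{f}_2$. This completes the argument.

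I do not anticipate a serious obstacle here, since the statement is essentially a bookkeeping consequence of $J^2 = 0$; the only point requiring genuine care is confirming that $\tilde{f}$ is a morphism in $\mod(\Lambda/J)$ rather than merely $\mod\Lambda$, but this is automatic because both $X/XJ$ and $YJ$ are annihilated by $J$ (the former by construction, the latter since $YJ \cdot J = YJ^2 = 0$), so every $\Lambda$-linear map between them is automatically $(\Lambda/J)$-linear. The crux of the whole lemma is simply the identity $f(x)j \in YJ^2 = 0$, which is what makes the radical-square-zero hypothesis do all the work.
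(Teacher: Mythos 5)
Your proof is correct and is essentially the paper's argument: the paper disposes of the lemma in one line, noting that $f(XJ)\subseteq (YJ)J=0$ by $J^{2}=0$, which is exactly the identity you isolate as the crux; your factorization through $\iota$ and the universal property of $\pi$, and the uniqueness via $\iota$ monic and $\pi$ epic, are just the routine details the paper leaves implicit.
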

\begin{proof}
This is clear since $f(XJ)\subseteq (YJ)J=0$ holds by $J^{2}=0$.
\end{proof}
%%%%%%%%%%%%%%%%%%%%%%%%%%%%%%%%%%%%%%%%%%%%%%%%%%%%%%%%%%%%%%%%%%%%%%%%%%%%%%%%%%%%%%%%%%%%%%%%%%%%%%%%%%%%%%%%%%%%%%%%%%%%%%%%%%%%%%%%%%%%%%%%%%%%%%%%%%%
The following lemma gives a construction of a minimal projective presentation of $FX$ from that of $X$.
\begin{lemma}\label{3.4}
Let $P_{1}^{X}\overset{p}{\rightarrow} P_{0}^{X}\overset{q}{\rightarrow} X \rightarrow 0$ be a minimal projective presentation of a $\Lambda$-module $X$.
Then 
\begin{align}\label{eq1}
\xymatrix{
0\ar[r]&(0,P_{1}^{X}/P_{1}^{X}J;0)\ar[r]^{\ \ \ \ \ (0,\tilde{p})}&FP^{X}_{0}\ar[r]^{Fq}&FX\ar[r]&0
}\tag{$\ast$}
\end{align}
is a minimal projective resolution of the $\Ma$-module $FX$.
\end{lemma}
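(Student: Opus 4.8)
The plan is to verify directly that the sequence \eqref{eq1} is a well-defined complex of projective $\Ma$-modules, that it is exact, and that it is minimal, treating these properties in turn. First I would record that all three terms are projective: $FP_{0}^{X}$ is projective by Proposition \ref{2.7}(2), while $(0,P_{1}^{X}/P_{1}^{X}J;0)$ is projective because $P_{1}^{X}/P_{1}^{X}J=\top P_{1}^{X}$ is a semisimple $(\Lambda/J)$-module and, by the classification of indecomposable projective $\Ma$-modules, each summand $(0,S;0)$ with $S$ simple is projective. Thus once \eqref{eq1} is shown exact it is automatically a projective resolution of $FX$, and only exactness and minimality remain. To see that $(0,\tilde{p})$ is a genuine morphism in $\mod\Ma$ I would invoke Lemma \ref{3.3}: minimality of the presentation yields $\Im p\subseteq\rad P_{0}^{X}=P_{0}^{X}J$ (using $J^{2}=0$), which produces $\tilde{p}\colon P_{1}^{X}/P_{1}^{X}J\to P_{0}^{X}J$ with $p=\iota\tilde{p}\pi$; the compatibility square defining a $\Ma$-morphism then holds trivially, since the first component of the source is $0$.

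For exactness I would use the standard fact that a sequence in $\mod\Ma$ is exact if and only if it is exact in each of the two components. The first-component sequence is $0\to 0\to P_{0}^{X}/P_{0}^{X}J\xto{q'}X/XJ\to 0$, which is exact because $q$ is a projective cover and hence induces an isomorphism $q'$ on tops. The second-component sequence is $0\to P_{1}^{X}/P_{1}^{X}J\xto{\tilde{p}}P_{0}^{X}J\xto{q''}XJ\to 0$; here $q''$ is surjective since $q(P_{0}^{X}J)=XJ$, and $\Ker q''=\Ker q\cap P_{0}^{X}J=\Im p$ because $\Im p\subseteq P_{0}^{X}J$, while $\Im p=\Im\tilde{p}$ through $\iota$. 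Injectivity of $\tilde{p}$ then follows from $p=\iota\tilde{p}\pi$ together with the minimality relation $\Ker p\subseteq\rad P_{1}^{X}=P_{1}^{X}J=\Ker\pi$.

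Finally, for minimality I would compute the radical of $FP_{0}^{X}$. Since $Q^{\s}$ is bipartite with all arrows running from $+$- to $-$-vertices (equivalently $J_{\Ma}=\left[\begin{smallmatrix}0&J\\0&0\end{smallmatrix}\right]$), the radical of any $\Ma$-module $(X',X'';\varphi)$ is concentrated in the second component and equals $(0,\Im\varphi;0)$; as $\varphi_{P_{0}^{X}}$ is the surjective multiplication map onto $P_{0}^{X}J$, this gives $\rad FP_{0}^{X}=(0,P_{0}^{X}J;0)$. Consequently $\Ker(Fq)=\Im(0,\tilde{p})=(0,\Im\tilde{p};0)\subseteq(0,P_{0}^{X}J;0)=\rad FP_{0}^{X}$, so $Fq$ is a projective cover, and since $(0,\tilde{p})$ is in addition a radical morphism with image in $\rad FP_{0}^{X}$, the resolution \eqref{eq1} is minimal. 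I expect the main obstacle to be the bookkeeping in the second component — proving $\Ker q''=\Im\tilde{p}$ and the injectivity of $\tilde{p}$ — since both rest on playing the minimality of the presentation ($\Im p\subseteq P_{0}^{X}J$ and $\Ker p\subseteq P_{1}^{X}J$) against $J^{2}=0$; a secondary care-point is the justification that exactness and radicals may be computed componentwise over the triangular matrix algebra $\Ma$.
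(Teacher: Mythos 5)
Your proof is correct and follows essentially the same route as the paper: both factor $p$ through $\tilde{p}\colon P_{1}^{X}/P_{1}^{X}J\to P_{0}^{X}J$ via Lemma \ref{3.3}, verify exactness of the triplet sequence (your componentwise checks are exactly what the paper's commutative diagram encodes, including the identification $\Ker q''=\Ker q=\Im\tilde{p}$), and conclude minimality from the fact that $(0,\tilde{p})$ is a radical morphism. Your explicit computation $\rad FP_{0}^{X}=(0,P_{0}^{X}J;0)$ and the projectivity check for $(0,P_{1}^{X}/P_{1}^{X}J;0)$ simply spell out details the paper leaves implicit.
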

\begin{proof}
Since $J^{2}=0$ holds, $\ker q$ is semisimple. 
Hence $\ker q=P_{1}^{X}/P_{1}^{X}J$ holds.
Since $\ker q$ is contained in $P_{0}^{X}J$, by Lemma \ref{3.3}, we have a decomposition 
\begin{align}
p=(P_{1}^{X}\xto{\pi}P_{1}^{X}/P_{1}^{X}J\xto{\tilde{p}}P_{0}^{X}J\xto{\iota}P_{0}^{X}),\notag
\end{align}
where $\pi$ and $\iota$ are natural morphisms.
Thus, we have the following commutative diagram 
\begin{align}
\xymatrix@R=6mm{
&0\ar[d]&0\ar[d]&&\\
&P_{1}^{X}/P_{1}^{X}J\ar@{=}[r]\ar[d]^{\tilde{p}}&\ker q\ar[d]&&\\
0\ar[r]&P_{0}^{X}J\ar[r]^{\iota}\ar[d]^{q^{\prime\prime}}&P_{0}^{X}\ar[r]\ar[d]^{q}&P_{0}^{X}/P_{0}^{X}J\ar[r]\ar@{=}[d]^{q^{\prime}}&0\\
0\ar[r]&XJ\ar[r]\ar[d]&X\ar[r]\ar[d]&X/XJ\ar[r]&0\\
&0&0&&
}\notag
\end{align}
where $Fq=(q^{\prime},q^{\prime\prime})$.
Thus the exact sequence
\begin{align}
0\xto{}(0,P_{1}^{X}/P_{1}^{X}J;0) \xto{(0,\tilde{p})}(P_{0}^{X}/P_{0}^{X}J, P_{0}^{X}J;1_{P^{X}_{0}J})\xto{(q^{\prime},q^{\prime\prime})}(X/XJ,XJ;\varphi_{X})\xto{}0 \notag
\end{align}
in $\mod \Ma$ is a minimal projective resolution of $FX$, because $\Ma$ is hereditary and $(0,\tilde{p})$ is in the radical of $\mod\Lambda$.
\end{proof}
%%%%%%%%%%%%%%%%%%%%%%%%%%%%%%%%%%%%%%%%%%%%%%%%%%%%%%%%%%%%%%%%%%%%%%%%%%%%%%%%%%%%%%%%%%%%%%%%%%%%%%%%%%%%%%%%%%%%%%%%%%%%%%%%%%%%%%%%%%%%%%%%%%%%%%%%%%%
\begin{proof}[Proof of Theorem \ref{3.2}]
Let $\iota:P_{0}^{X}J\rightarrow P_{0}^{X}$, $\iota^{\prime}: XJ\rightarrow X$ and $\pi: P_{1}^{X}\rightarrow P_{1}^{X}/P_{1}^{X}J$ are natural morphisms.

(1)$\Rightarrow$(2):
Assume that $X$ is $\tau$-rigid.
By Proposition \ref{2.2}(2), we have $\add P_{0}^{X}\cap \add P_{1}^{X}=0$.
Now we show that $FX$ is a $\tau$-rigid $\Ma$-module.
We have a minimal projective resolution (\ref{eq1}) in Lemma \ref{3.4}.
By Proposition \ref{2.2}(1), we have only to show that 
\begin{align}\label{eq2}
((0,\tilde{p}), FX):\Hom_{\Ma}(FP,FX)\rightarrow \Hom_{\Ma}((0,P_{1}^{X}/P_{1}^{X}J;0),FX) \tag{$\natural$}
\end{align}
is surjective.
Let 
\begin{align}
(0,h):(0,P_{1}^{X}/P_{1}^{X}J;0)\rightarrow FX=(X/XJ, XJ;\varphi_{X})\notag
\end{align}
be a morphism in $\mod\Ma$ and $f:=\iota^{\prime} h \pi:P_{1}^{X}\rightarrow X$ a morphism in $\mod\Lambda$.
Since $X$ is $\tau$-rigid, there exists a morphism $g:P_{0}^{X}\rightarrow X$ such that $f=gp$.
Let 
\begin{align}
Fg:=(g^{\prime},g^{\prime\prime}):(P_{0}^{X}/P_{0}^{X}J,P_{0}^{X}J; 1_{P_{0}^{X}J})\rightarrow (X/XJ, XJ;\varphi_{X}).\notag
\end{align}
Then we have 
\begin{align}
\iota^{\prime} h \pi=f=gp=g\iota\tilde{p}\pi=\iota^{\prime} g^{\prime\prime}\tilde{p}\pi, \notag
\end{align}
and hence we have $h=g^{\prime\prime}\tilde{p}$.
Thus we have $(0,h)=(g^{\prime},g^{\prime\prime})(0,\tilde{p})$.
Consequently, the map (\ref{eq2}) is surjective.
\begin{align}
\xymatrix@R=5mm{P_{1}^{X}\ar[dd]^{f}\ar[rd]^{p}\ar[rrr]^{\pi}&&&P_{1}^{X}/P_{1}^{X}J\ar[dd]^{h}\ar[dl]^{\tilde{p}}\\
&P_{0}^{X}\ar[dl]^{g}&P_{0}^{X}J\ar[dr]^{g^{\prime\prime}}\ar[l]^{\iota}&\\
X&&&XJ\ar[lll]^{\iota^{\prime}}
}\notag
\end{align}

(2)$\Rightarrow$(1):
Assume that $FX$ is $\tau$-rigid and $\add P_{0}^{X}\cap \add P_{1}^{X}=0$.
We have only to show that 
\begin{align}\label{eq3}
(p,X):\Hom_{\Lambda}(P_{0}^{X},X) \rightarrow \Hom_{\Lambda}(P_{1}^{X},X) \tag{$\sharp$}
\end{align}
is surjective by Proposition \ref{2.2}(1).
Let $f:P_{1}^{X}\rightarrow X$ be a morphism in $\mod\Lambda$.
Since $\add P_{0}^{X}\cap \add P_{1}^{X}=0$ holds, $\Im f$ is contained in $XJ$.
Thus, by Lemma \ref{3.3}, 
there exists a morphism $\tilde{f}:P_{1}^{X}/P_{1}^{X}J\rightarrow XJ$ in $\mod\Lambda$ such that $f=\iota^{\prime} \tilde{f}\pi$.
Now we consider a morphism
\begin{align}
(0,\tilde{f}):(0,P_{1}^{X}/P_{1}^{X}J;0)\xto{}FX=(X/XJ,XJ;\varphi_{X}) \notag
\end{align}
in $\mod\Ma$.
Since (\ref{eq1}) in Lemma \ref{3.4} gives a minimal projective resolution and $FX$ is $\tau$-rigid, 
there exists a morphism $(g^{\prime}, g^{\prime\prime}):FP_{0}^{X}\rightarrow FX$ in $\mod\Ma$ 
such that $(0,\tilde{f})=(g^{\prime},g^{\prime\prime})(0,\tilde{p})$.
In particular, we have $\tilde{f}=g^{\prime\prime}\tilde{p}$.
Since $F$ is full by Proposition \ref{2.7}(1), there exists a morphism $g:P_{0}^{X}\rightarrow X$ such that $Fg=(g^{\prime},g^{\prime\prime})$.
Then $g^{\prime\prime}$ is a restriction of $g$ by construction of $F$, and we have
\begin{align}
gp=g\iota\tilde{p}\pi=\iota^{\prime}g^{\prime\prime}\tilde{p}\pi=\iota^{\prime}\tilde{f}\pi=f. \notag
\end{align}
Consequently, the map (\ref{eq3}) is surjective.
\begin{align}
\xymatrix@R=5mm{P_{1}^{X}\ar[dd]^{f}\ar[rd]^{p}\ar[rrr]^{\pi}&&&P_{1}^{X}/P_{1}^{X}J\ar[dd]^{\tilde{f}}\ar[dl]^{\tilde{p}}\\
&P_{0}^{X}\ar[dl]^{g}&P_{0}^{X}J\ar[dr]^{g^{\prime\prime}}\ar[l]^{\iota}&\\
X&&&XJ\ar[lll]^{\iota^{\prime}}
}\notag
\end{align}
This finishes the proof. 
\end{proof}
%%%%%%%%%%%%%%%%%%%%%%%%%%%%%%%%%%%%%%%%%%%%%%%%%%%%%%%%%%%%%%%%%%%%%%%%%%%%%%%%%%%%%%%%%%%%%%%%%%%%%%%%%%%%%%%%%%%%%%%%%%%%%%%%%%%%%%%%%%%%%%%%%%%%%%%%%%%
For any indecomposable $\Lambda$-module $X$, 
we decompose the terms $P_{0}^{X}$ and $P_{1}^{X}$ in a minimal projective presentation of $X$ as
\begin{align}
P_{0}^{X}:=\bigoplus_{i\in Q_{\ver}} (e_{i}\Lambda)^{n_{i}},\ P_{1}^{X}:=\bigoplus_{i\in Q_{\ver}} (e_{i}\Lambda)^{m_{i}}, \notag
\end{align} 
where $n_{i}$ and $m_{i}$ are multiplicities of the indecomposable projective $\Lambda$-module corresponding to $i\in Q_{\ver}$.
We denote by $Q^{X}$ the full subquiver of $Q^{\s}$ with
\begin{align}
Q_{\ver}^{X}:=\{ i^{+}\in Q^{\s}_{\ver}\mid n_{i}\neq 0\}\coprod\{ i^{-}\in Q^{\s}_{\ver} \mid m_{i}\neq 0\}. \notag
\end{align}
Then, the condition $\add P_{0}^{X}\cap \add P_{1}^{X}=0$ is satisfied if and only if $Q^{X}$ is a single subquiver of $Q^{\s}$.
In particular, if $X$ is $\tau$-rigid, then $Q^{X}$ is a single subquiver of $Q^{s}$ by Proposition \ref{2.2}(2).

%%%%%%%%%%%%%%%%%%%%%%%%%%%%%%%%%%%%%%%%%%%%%%%%%%%%%%%%%%%%%%%%%%%%%%%%%%%%%%%%%%%%%%%%%%%%%%%%%%%%%%%%%%%%%%%%%%%%%%%%%%%%%%%%%%%%%%%%%%%%%%%%%%%%%%%%%%%
Now we are ready to prove Theorem \ref{3.1}.
For any full subquiver $Q^{\prime}$ of $Q^{\s}$, let 
\begin{align}
\trigid(\Ma,Q^{\prime})
:&=\trigid\Ma\cap \Im L_{e_{Q^{\prime}}}\notag\\
&=\{ X\in\trigid\Ma\mid \add(P_{0}^{X}\oplus P_{1}^{X})\subseteq \add(\bigoplus_{i\in Q_{0}^{\prime}}e_{i}\Ma)\}\notag\\
\trigid^{\m}(\Ma,Q^{\prime})
&=\{ X\in\trigid\Ma\mid \add(P_{0}^{X}\oplus P_{1}^{X})=\add(\bigoplus_{i\in Q_{0}^{\prime}}e_{i}\Ma)\},\notag
\end{align}
where $e_{Q^{\prime}}:=\sum_{i\in Q_{\ver}^{\prime}}e_{i}$ and $L_{e_{Q^{\prime}}}$ is the functor in Subsection \ref{subsec2.1}.
We denote by $\trigid_{\np}\Lambda$ the subset of $\trigid\Lambda$ consisting of nonprojective $\Lambda$-modules.
\begin{proof}[Proof of Theorem \ref{3.1}]
(1)$\Leftrightarrow$(2):
First we claim that the functor $F:\mod\Lambda\xto{}\mod\Ma$ induces a bijection
\begin{align}\label{eq4}
\trigid_{\np} \Lambda \longrightarrow \bigcup_{Q^{\prime}\in\S}\trigid_{\np}(\Ma,Q^{\prime}). \tag{$\flat$}
\end{align}
Indeed, by Proposition \ref{2.7}(2) and Theorem \ref{3.2}, $X$ is an indecomposable nonprojective $\tau$-rigid $\Lambda$-module 
if and only if $FX$ is an indecomposable nonprojective $\tau$-rigid $\Ma$-module satisfying 
$\add P_{0}^{X}\cap \add P_{1}^{X}=0$, or equivalently $Q^{X}\in \S$.
In this case, since $P_{0}^{FX}\oplus P_{1}^{FX}\in \add e_{Q^{X}}\Ma$ holds by Lemma \ref{3.4}, we have $FX\in \Im L_{e_{Q^{X}}}$ by Lemma \ref{2.3}(2).
Hence the claim follows from that the functor $F$ induces a stable equivalence $\smod\Lambda\rightarrow \smod\Ma$ by Proposition \ref{2.7}(1).

Next, by Proposition \ref{2.4}, for any full subquiver $Q^{\prime}$ of $Q^{\s}$, we have bijections
\begin{align}
\trigid(\Ma,Q^{\prime}) \longleftrightarrow \trigid (e_{Q^{\prime}}\Ma e_{Q^{\prime}}). \notag
\end{align}
Since $Q^{\prime}$ is bipartite, 
there is an isomorphism $e_{Q^{\prime}}\Ma e_{Q^{\prime}}\simeq KQ^{\prime}$.
Since there are only finitely many single subquiver of $Q^{\s}$, we have that
$\Lambda$ is $\tau$-rigid-finite if and only if $KQ^{\prime}$ is $\tau$-rigid-finite for every single subquiver $Q^{\prime}$ of $Q^{\s}$. 
Hence the assertion follows from Theorem \ref{2.6}.
\end{proof}

By the proof of Theorem \ref{3.1}, we have the following corollary.
We denote by $\S^{+}$ the set of all connected single subquivers of $Q^{\s}$ except the quivers with exactly one vertex $i^{-}$ for any $i\in Q_{0}$.
Let 
\begin{align}
\trigid^{\m}KQ^{\prime}:=\{ M\in\trigid KQ^{\prime}\mid \add(P_{0}^{M}\oplus P_{1}^{M})=\add(KQ^{\prime})\} \notag
\end{align}
for a quiver $Q^{\prime}$.
\begin{corollary}\label{3.5}
There is a bijection
\begin{align}
\trigid\Lambda \longrightarrow\coprod_{Q^{\prime}\in\S^{+}}\trigid^{\m}KQ^{\prime} \notag
\end{align}
given by $X\mapsto (FX)e_{Q^{X}}$.
In particular, the cardinality of $\trigid\Lambda$ is 
\begin{align}
|\trigid\Lambda|=\sum_{Q^{\prime}\in\S^{+}}|\trigid^{\m}KQ^{\prime}|.\notag
\end{align}
\end{corollary}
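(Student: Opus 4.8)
The plan is to read the stated map $X\mapsto (FX)e_{Q^X}$ as the composite of the two bijections already used in the proof of Theorem~\ref{3.1} — the stable equivalence $F$ of Proposition~\ref{2.7}(1) and the restriction equivalences $R_{e_{Q'}}$ of Proposition~\ref{2.4} — and then to reconcile two bookkeeping discrepancies between the present statement and the bijection (\ref{eq4}): the set $\trigid\Lambda$ includes the indecomposable \emph{projectives} omitted by $\trigid_{\np}\Lambda$, and the target here is the sincere subset $\trigid^{\m}KQ'$ of $\trigid KQ'$, indexed only by connected $Q'\in\S^{+}$. Since $\tau P=0$ for projective $P$, every indecomposable projective module is $\tau$-rigid, so $\trigid\Lambda=\trigid_{\np}\Lambda\sqcup\{\text{indec.\ projectives}\}$, and the same holds over $\Ma$.

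To see the map is well defined, let $X$ be indecomposable $\tau$-rigid. By Theorem~\ref{3.2} and Proposition~\ref{2.7}(2), $FX$ is an indecomposable $\tau$-rigid $\Ma$-module, and by Proposition~\ref{2.2}(2) we have $Q^X\in\S$. Reading the minimal projective presentation of $FX$ off Lemma~\ref{3.4} shows $\add(P_0^{FX}\oplus P_1^{FX})=\add(e_{Q^X}\Ma)$, the summands at the ``$+$'' vertices coming from $FP_0^X$ and those at the ``$-$'' vertices from $(0,P_1^X/P_1^XJ;0)$. Hence $FX\in\Im L_{e_{Q^X}}$ by Lemma~\ref{2.3}(2), and applying $R_{e_{Q^X}}$ together with $e_{Q^X}\Ma e_{Q^X}\simeq KQ^X$ yields $(FX)e_{Q^X}\in\trigid KQ^X$ by Proposition~\ref{2.4}; transporting the above equality through the matched minimal presentations in the proof of Proposition~\ref{2.4} upgrades this to the sincerity condition, i.e.\ $(FX)e_{Q^X}\in\trigid^{\m}KQ^X$. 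The step I expect to be the main obstacle is showing $Q^X$ is \emph{connected}, so that $Q^X\in\S^{+}$ and the target is a genuine coproduct. Here I would argue that $(FX)e_{Q^X}$ is indecomposable over $KQ^X$ (being the image of the indecomposable $FX$ under the equivalence of Lemma~\ref{2.3}(1)), whence its minimal projective presentation is supported on a single connected component; combined with sincerity this forces $Q^X$ to be connected. Finally $Q^X$ is not a lone vertex $\{i^-\}$, as that would give $P_0^X=0$ and $X=0$; so $Q^X\in\S^{+}$.

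For injectivity, $F$ is injective on isomorphism classes of indecomposables: it preserves and reflects projectivity (Proposition~\ref{2.7}(2)), is bijective on nonprojective indecomposables as a stable equivalence (Proposition~\ref{2.7}(1)), and is manifestly injective on the indecomposable projectives; since $R_{e_{Q'}}$ is an equivalence on $\Im L_{e_{Q'}}$, equal images force $Q^X=Q^Y$ and then $FX\cong FY$, hence $X\cong Y$. For surjectivity, fix $Q'\in\S^{+}$ and $M\in\trigid^{\m}KQ'$, and set $N:=L_{e_{Q'}}(M)$, an indecomposable $\tau$-rigid $\Ma$-module with $\add(P_0^N\oplus P_1^N)=\add(e_{Q'}\Ma)$ by Proposition~\ref{2.4}. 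If $Q'=\{i^+\}$ then $N=F(e_i\Lambda)$ is the image of a projective $\Lambda$-module; otherwise $Q'$ contains an arrow, so $N$ is nonprojective (an indecomposable projective $\Ma$-module has single-vertex support), and the stable equivalence supplies a nonprojective indecomposable $X$ with $FX\cong N$. Comparing presentations via Lemma~\ref{3.4} gives $Q^X=Q'$, so $\add P_0^X\cap\add P_1^X=0$, and Theorem~\ref{3.2} shows $X$ is a $\tau$-rigid $\Lambda$-module with $(FX)e_{Q^X}\cong M$. The cardinality formula then follows at once.
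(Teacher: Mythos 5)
Your proof is correct and follows essentially the same route as the paper: it realizes the map $X\mapsto (FX)e_{Q^{X}}$ as the composite of the bijection induced by $F$ (via Theorem \ref{3.2}, Lemma \ref{3.4} and Proposition \ref{2.7}) with the restriction bijections of Proposition \ref{2.4}, treating the indecomposable projectives and the excluded vertices $i^{-}$ exactly as the paper does. The only difference is organizational: you check well-definedness, injectivity and surjectivity directly rather than composing the bijection (\ref{eq4}) with the bookkeeping identities, and in doing so you usefully make explicit the connectedness of $Q^{X}$ for indecomposable $X$, a point the paper leaves implicit when asserting $FX\in\trigid^{\m}(\Ma,Q^{X})$ with $Q^{X}\in\S^{+}$.
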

\begin{proof}
Since the map $e_{i}\Lambda\mapsto e_{i^{+}}\Ma$ gives a bijection between 
the set of isomorphism classes of indecomposable projective $\Lambda$-modules and that of indecomposable $\Ma$-modules corresponding to the vertex $i^{+}\in Q_{0}^{+}$,
the functor $F$ induces a bijection
\begin{align}
\trigid\Lambda\longrightarrow\bigcup_{Q^{\prime}\in\S} \trigid(\Ma,Q^{\prime})\setminus\{ e_{i^{-}}\Ma\mid i\in Q_{0}\} \notag
\end{align}
by (\ref{eq4}).
Then, for any $X\in\trigid\Lambda$, we have $FX\in \trigid^{\m}(\Ma,Q^{X})$ with $Q^{X}\in\S^{+}$.
Moreover, we have 
\begin{align}
\bigcup_{Q^{\prime}\in\S} \trigid(\Ma,Q^{\prime})\setminus\{ e_{i^{-}}\Ma\mid i\in Q_{0}\}
&=\coprod_{Q^{\prime}\in\S} \trigid^{\m}(\Ma,Q^{\prime})\setminus\{ e_{i^{-}}\Ma\mid i\in Q_{0}\}\notag\\
&=\coprod_{Q^{\prime}\in\S^{+}} \trigid^{\m}(\Ma,Q^{\prime}). \notag
\end{align}
Hence the assertion follows from that the map $M\mapsto M_{e^{Q^{\prime}}}$ gives a bijection 
\begin{align}
\trigid^{\m}(\Ma, Q^{\prime})\longrightarrow \trigid^{\m}KQ^{\prime} \notag
\end{align}
by Proposition \ref{2.4}.
\end{proof}
\section{Applications and examples}
In this section, we give applications and examples of results in previous section.
As an immediate consequence of Theorem \ref{3.2}, we have the following two corollaries.
\begin{corollary}
Let $\Lambda$ be a representation-finite algebra with radical square zero and $X$ a $\Lambda$-module.
Then $X$ is a $\tau$-rigid $\Lambda$-module if and only if $\add P_{0}^{X}\cap \add P_{1}^{X}=0$.
\end{corollary}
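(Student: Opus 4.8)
The plan is to deduce this corollary directly from Theorem \ref{3.2} together with Proposition \ref{2.5}(1), which guarantees that every indecomposable module over a representation-finite hereditary algebra is rigid. The key observation is that under the representation-finiteness hypothesis, the $\tau$-rigidity condition on $FX$ in Theorem \ref{3.2}(2) becomes automatic, so that only the transversality condition $\add P_{0}^{X}\cap \add P_{1}^{X}=0$ remains.

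First I would recall that by Proposition \ref{2.7}(3), since $\Lambda$ is representation-finite with radical square zero, the separated quiver $Q^{\s}$ is a disjoint union of Dynkin quivers; hence so is every single subquiver of $Q^{\s}$. Consequently $\Ma\simeq KQ^{\s}$ is a hereditary algebra each of whose connected components is of Dynkin type. By Proposition \ref{2.5}(1), every indecomposable $\Ma$-module is then rigid, and since $\Ma$ is hereditary, rigid modules coincide with $\tau$-rigid modules (as noted after Definition \ref{2.1}). Thus every $\Ma$-module whose indecomposable summands satisfy the appropriate condition is $\tau$-rigid.

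The forward direction is immediate: if $X$ is $\tau$-rigid, then $\add P_{0}^{X}\cap \add P_{1}^{X}=0$ by Proposition \ref{2.2}(2). For the converse, I would argue as follows. Suppose $\add P_{0}^{X}\cap \add P_{1}^{X}=0$. I first reduce to the indecomposable case by noting that $\tau$-rigidity, together with the transversality condition, behaves well under direct sums; more precisely, since $\Hom_{\Lambda}(X,\tau X)=0$ is equivalent to the surjectivity statement in Proposition \ref{2.2}(1), and minimal projective presentations of $X_{1}\oplus X_{2}$ are direct sums of those of $X_{1}$ and $X_{2}$, the condition $\add P_{0}^{X}\cap \add P_{1}^{X}=0$ passes to each summand. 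Having reduced to $X$ indecomposable, $FX$ is indecomposable by Proposition \ref{2.7}(2), hence rigid and so $\tau$-rigid by the previous paragraph. Then both conditions in Theorem \ref{3.2}(2) are satisfied, so Theorem \ref{3.2} yields that $X$ is $\tau$-rigid.

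The main obstacle I anticipate is the reduction to indecomposable summands: while Theorem \ref{3.2} as stated is phrased for an arbitrary $\Lambda$-module $X$, the condition that $FX$ is $\tau$-rigid for a decomposable $X$ requires that the pairwise $\Hom$-vanishing $\Hom_{\Ma}(FX_{i},\tau FX_{j})=0$ hold across distinct summands, not merely that each summand be individually rigid. This is exactly where I must invoke that \emph{all} indecomposables over the Dynkin hereditary algebra $\Ma$ are rigid and that any direct sum of indecomposables supported on a common single subquiver is again rigid, which follows because the relevant single subquiver is Dynkin and hence $KQ^{\prime}$ is representation-finite with every module rigid by Proposition \ref{2.5}(1). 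Care is needed to confirm that the summands of $FX$ are indeed all supported on a single subquiver precisely when $\add P_{0}^{X}\cap \add P_{1}^{X}=0$, which is the content of the paragraph preceding the proof of Theorem \ref{3.1}.
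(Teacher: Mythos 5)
Your ``only if'' direction and your treatment of indecomposable $X$ are correct, and they coincide with the paper's argument: for indecomposable $X$, the module $FX$ is indecomposable by Proposition \ref{2.7}(2), hence rigid by Proposition \ref{2.5}(1), hence $\tau$-rigid since $\Delta$ is hereditary, and Theorem \ref{3.2} concludes. The genuine gap is in your reduction to indecomposable summands. You correctly isolate the obstacle---for $X=X_{1}\oplus\cdots\oplus X_{n}$ one needs $\Hom_{\Delta}(FX_{i},\tau FX_{j})=0$ for all $i\neq j$, not merely rigidity of each $FX_{i}$---but the statement you invoke to dispose of it is false. Proposition \ref{2.5}(1) asserts that every \emph{indecomposable} module over a Dynkin path algebra is rigid; it does not follow that a direct sum of indecomposables supported on a common single Dynkin subquiver is rigid, because rigidity is not closed under direct sums: $\Ext^{1}$ between distinct summands can be nonzero. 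Concretely, let $Q^{\prime}$ be the Dynkin quiver $1^{+}\to 2^{-}\leftarrow 3^{+}$ of type $A_{3}$. The non-split exact sequence
\begin{align}
0\to P_{3^{+}}\to I_{2^{-}}\to S_{1^{+}}\to 0, \notag
\end{align}
where $I_{2^{-}}$ is the indecomposable of dimension vector $(1,1,1)$, gives $\Ext^{1}_{KQ^{\prime}}(S_{1^{+}},P_{3^{+}})\neq 0$, equivalently $\Hom_{KQ^{\prime}}(P_{3^{+}},\tau S_{1^{+}})=\Hom_{KQ^{\prime}}(P_{3^{+}},P_{3^{+}})\neq 0$. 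So $S_{1^{+}}\oplus P_{3^{+}}$ is not rigid, although both summands are indecomposable and live on the single Dynkin subquiver $Q^{\prime}$.

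Worse, this gap cannot be repaired, because the ``if'' direction of the corollary is actually false for decomposable $X$; the statement is only correct when $X$ is indecomposable. Take $\Lambda=KQ$ with $Q\colon 1\to 2\leftarrow 3$, which satisfies $J^{2}=0$ and is representation-finite, and set $X=S_{1}\oplus e_{3}\Lambda$. The minimal projective presentations are $e_{2}\Lambda\to e_{1}\Lambda\to S_{1}\to 0$ and $0\to e_{3}\Lambda\to e_{3}\Lambda\to 0$, so
\begin{align}
\add P_{0}^{X}\cap \add P_{1}^{X}=\add(e_{1}\Lambda\oplus e_{3}\Lambda)\cap\add(e_{2}\Lambda)=0, \notag
\end{align}
yet $\tau S_{1}\simeq e_{3}\Lambda$, whence $\Hom_{\Lambda}(X,\tau X)\supseteq\End_{\Lambda}(e_{3}\Lambda)\neq 0$ and $X$ is not $\tau$-rigid. (This does not contradict Theorem \ref{3.2}: under $F$ this example becomes exactly the one above, $FX=S_{1^{+}}\oplus P_{3^{+}}$, which is indeed not $\tau$-rigid over $\Delta$.) Note that the paper's own proof makes the same jump---it applies Proposition \ref{2.5}(1) to the possibly decomposable module $FX$---so it, too, is only valid for indecomposable $X$; your proposal differs from the paper only in making the decomposable case explicit, and the example shows no repair of that case is possible. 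The correct reading of the corollary, under which both your argument and the paper's are complete, is with $X$ indecomposable.
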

\begin{proof}
The `only if' part follows from Proposition \ref{2.2}(2).
We show the `if' part.
Since $\Lambda$ is representation-finite, $\Ma$ is a finite product of path algebras with Dynkin quivers by Proposition \ref{2.7}(3).
Hence $FX$ is a $\tau$-rigid $\Ma$-module by Proposition \ref{2.5}(1).
Thus, if $\add P_{0}^{X}\cap \add P_{1}^{X}=0$ holds, then $X$ is $\tau$-rigid by Theorem \ref{3.2}.
\end{proof}
%%%%%%%%%%%%%%%%%%%%%%%%%%%%%%%%%%%%%%%%%%%%%%%%%%%%%%%%%%%%%%%%%%%%%%%%%%%%%%%%%%%%%%%%%%%%%%%%%%%%%%%%%%%%%%%%%%%%%%%%%%%%%%%%%%%%%%%%%%%%%%%%%%%%%%%%%%%
We give a positive answer to a question posed by Zhang \cite{Zh}.
\begin{corollary}\label{4.1}
Let $\Lambda$ be an algebra with radical square zero.
If every indecomposable $\Lambda$-module is $\tau$-rigid, then $\Lambda$ is representation-finite.
\end{corollary}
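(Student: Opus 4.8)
The plan is to argue by contraposition, combining the characterization of representation-finiteness in Proposition \ref{2.7}(3) with the $\tau$-rigidity criterion of Theorem \ref{3.2}. So suppose $\Lambda$ is \emph{not} representation-finite. By Proposition \ref{2.7}(3) the separated quiver $Q^{\s}$ is not a disjoint union of Dynkin quivers, hence at least one connected component $Q^{\prime}$ of $Q^{\s}$ fails to be Dynkin. Since $\Ma\simeq KQ^{\s}$ is hereditary and $Q^{\s}$ is bipartite (Proposition referenced in Subsection \ref{subsec2.2}), the component $Q^{\prime}$ is a connected acyclic non-Dynkin quiver and $KQ^{\prime}$ is a direct factor of $KQ^{\s}$; in particular $KQ^{\prime}$ is not representation-finite.

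First I would produce a non-$\tau$-rigid indecomposable supported on the bad component. By Proposition \ref{2.5}(2), since $KQ^{\prime}$ is not representation-finite there is an indecomposable $KQ^{\prime}$-module $M$ that is not rigid; as $KQ^{\prime}$ is hereditary, rigid and $\tau$-rigid coincide, so $M$ is not $\tau$-rigid. Viewing $M$ along the factor $KQ^{\prime}\hookrightarrow \Ma$, it is an indecomposable $\Ma$-module that is not $\tau$-rigid. Note $M$ is automatically non-projective, because any projective $P$ satisfies $\tau P=0$ and hence $\Hom_{\Ma}(P,\tau P)=0$, so projectives are $\tau$-rigid.

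Next I would transport $M$ back through the stable equivalence. By Proposition \ref{2.7}(1),(2) the functor $F$ induces a bijection between isomorphism classes of non-projective indecomposable $\Lambda$-modules and those of non-projective indecomposable $\Ma$-modules, and a stable isomorphism between non-projective indecomposables is an honest isomorphism. Hence there is a non-projective indecomposable $\Lambda$-module $X$ with $FX\simeq M$. Since $FX$ is not $\tau$-rigid, the implication (1)$\Rightarrow$(2) of Theorem \ref{3.2} forces $X$ to fail to be $\tau$-rigid. This contradicts the hypothesis that every indecomposable $\Lambda$-module is $\tau$-rigid, and therefore $\Lambda$ is representation-finite.

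The quiver bookkeeping and the identification of $KQ^{\prime}$ as a factor of $KQ^{\s}$ are routine. The step deserving care—the main obstacle—is the transport across the stable equivalence: I must verify that the non-$\tau$-rigid module $M$ is non-projective (so that it lies in the range of the bijection induced by $F$) and that $FX$ is genuinely isomorphic to $M$ as an $\Ma$-module, not merely stably, so that Theorem \ref{3.2} may be applied to the actual module $FX$. Both facts follow from the indecomposability and non-projectivity of $M$, but they are precisely the points where the argument must be pinned down.
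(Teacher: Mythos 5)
Your proof is correct and follows essentially the same route as the paper's: pass to a non-Dynkin component of the separated quiver, use Proposition \ref{2.5}(2) to get a non-rigid indecomposable $\Ma$-module, lift it through the stable equivalence $F$ to a nonprojective indecomposable $\Lambda$-module, and conclude via Theorem \ref{3.2}. Your extra care about the non-projectivity of $M$ and about the stable isomorphism $FX\simeq M$ being an honest isomorphism only makes explicit what the paper leaves implicit.
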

\begin{proof}
Assume that $\Lambda$ is not representation-finite.
By Proposition \ref{2.7}(3), the separated quiver contains a non-Dynkin quiver as a subquiver.
By Proposition \ref{2.5}(2), there exists an indecomposable $\Ma$-module $M$ which is not rigid.
By Proposition \ref{2.7}(1), there exists an indecomposable nonprojective $\Lambda$-module $X$ such that $FX\simeq M$.
The $\Lambda$-module $X$ is not $\tau$-rigid by Theorem \ref{3.2}.
\end{proof}
%%%%%%%%%%%%%%%%%%%%%%%%%%%%%%%%%%%%%%%%%%%%%%%%%%%%%%%%%%%%%%%%%%%%%%%%%%%%%%%%%%%%%%%%%%%%%%%%%%%%%%%%%%%%%%%%%%%%%%%%%%%%%%%%%%%%%%%%%%%%%%%%%%%%%%%%%%%
At the end of this paper, we apply our main results to the following algebras which associate with Brauer graph algebras.
We start with the following observation.
\begin{proposition}\cite[Corollary 3.7]{Ad}\label{4.2}
Let $\Lambda$ be a ring-indecomposable non-semisimple symmetric algebra and $\bLambda:=\Lambda/\soc\Lambda$.
Then there is a bijection
\begin{align}
\trigid\Lambda \longrightarrow \trigid\bLambda \notag
\end{align}
given by $X\mapsto X\otimes_{\Lambda}\bLambda$.
In particular, $\Lambda$ is $\tau$-rigid-finite if and only if $\bLambda$ is $\tau$-rigid-finite.
\end{proposition}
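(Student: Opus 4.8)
The plan is to analyze the right-exact functor $G:=-\otimes_{\Lambda}\bLambda\colon\mod\Lambda\to\mod\bLambda$, which sends a module $X$ to $X/X\soc\Lambda$, and to show that it restricts to the asserted bijection on $\tau$-rigid modules. Since $\Lambda$ is symmetric it is self-injective, so each indecomposable projective $P_{i}=e_{i}\Lambda$ has simple socle and $\soc\Lambda=\bigoplus_{i}\soc P_{i}$ is a two-sided ideal; ring-indecomposability and non-semisimplicity give $\soc\Lambda\subseteq\rad\Lambda$. The first observation is that $G$ carries projectives to projectives: $G(P_{i})=e_{i}\bLambda=P_{i}/\soc P_{i}$ is the indecomposable projective $\bLambda$-module at $i$, and $P_{i}\mapsto G(P_{i})$ is a bijection on indecomposable projectives. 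Moreover $\Hom_{\Lambda}(P_{i},X)=Xe_{i}$ surjects onto $\Hom_{\bLambda}(GP_{i},GX)=(X/X\soc\Lambda)e_{i}$ with kernel $X\soc\Lambda\,e_{i}$.

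Next I would show that $G$ sends $\trigid\Lambda$ into $\trigid\bLambda$. Let $X$ be $\tau$-rigid with minimal projective presentation $P_{1}^{X}\xrightarrow{p}P_{0}^{X}\xrightarrow{q}X\to 0$. By Proposition \ref{2.2}(2) we have $\add P_{0}^{X}\cap\add P_{1}^{X}=0$, whence every map $P_{1}^{X}\to P_{0}^{X}$ is radical. Applying $G$ gives $GP_{1}^{X}\xrightarrow{\bar p}GP_{0}^{X}\to GX\to 0$; using $P_{0}^{X}\soc\Lambda=\soc P_{0}^{X}$ one checks $\ker(Gq)=(\Im p+\soc P_{0}^{X})/\soc P_{0}^{X}=\Im\bar p$, so this is a genuine presentation, and it is minimal because tops are unchanged by $G$. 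Proposition \ref{2.2}(1) then transfers: the commutative square relating $\Hom_{\Lambda}(P_{j}^{X},X)$ and $\Hom_{\bLambda}(GP_{j}^{X},GX)$ has surjective vertical maps, so surjectivity of $(p,X)$ forces surjectivity of $(\bar p,GX)$ and hence $GX$ is $\tau$-rigid. Note that the converse direction is false for general modules (a non-$\tau$-rigid simple can become a projective, hence $\tau$-rigid, under $G$); this is precisely why one must restrict to $\trigid\Lambda$ rather than claim that $G$ preserves $\tau$-rigidity.

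For bijectivity I would construct the inverse by lifting. Given an indecomposable $\tau$-rigid $\bLambda$-module $\overline{Y}$ with minimal projective presentation $\overline{P}_{1}\xrightarrow{\bar s}\overline{P}_{0}\to\overline{Y}\to 0$ (so $\add\overline{P}_{0}\cap\add\overline{P}_{1}=0$ by Proposition \ref{2.2}(2)), lift the projectives to $P_{1},P_{0}$ over $\Lambda$ and lift $\bar s$ to $s\colon P_{1}\to P_{0}$; this is possible since $\Hom_{\Lambda}(P_{1},P_{0})\twoheadrightarrow\Hom_{\bLambda}(\overline{P}_{1},\overline{P}_{0})$ is the canonical surjection $e\Lambda e'\to e\bLambda e'$. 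Put $Y:=\Cok s$. Right-exactness of $G$ together with $G(s)=\bar s$ yields $G(Y)=\Cok(\bar s)=\overline{Y}$, and $\add$-disjointness forces $s$ to be radical, so $P_{0}\to Y$ is a projective cover. One then checks (by the same transfer of Proposition \ref{2.2}, now in the reverse direction) that $Y$ is $\tau$-rigid and indecomposable, and that $Y\mapsto GY$ and $\overline{Y}\mapsto Y$ are mutually inverse.

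The hard part will be the bookkeeping around minimality and the well-definedness of the lift: showing that the $G$-image of a minimal presentation of a $\tau$-rigid module is again minimal (i.e.\ that $GP_{1}^{X}\to\ker(Gq)$ is a projective cover, equivalently $\Im p\cap\soc P_{0}^{X}\subseteq\rad(\Im p)$), and that $Y=\Cok s$ is independent, up to isomorphism, of the chosen lift $s$ — two lifts differ by a map into $\soc P_{0}$, and one must use the $\tau$-rigidity of $Y$ to see the cokernel is unchanged. Both points rely essentially on the symmetric hypothesis (simple socles and $\soc\Lambda$ annihilated by $\rad\Lambda$) together with the disjointness $\add P_{0}^{X}\cap\add P_{1}^{X}=0$ supplied by Proposition \ref{2.2}(2). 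Once these are settled, the ``in particular'' statement is immediate, since a bijection of sets preserves finiteness.
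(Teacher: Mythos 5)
First, a point of comparison: the paper gives no proof of this proposition at all — it is imported from \cite[Corollary 3.7]{Ad} — so your attempt can only be judged on its own merits, and it has genuine gaps, concentrated exactly where you write ``one checks.'' The missing idea is the standard socle fact for self-injective algebras: since $\Lambda$ is symmetric, every indecomposable non-projective $\Lambda$-module $X$ satisfies $X\soc\Lambda=0$ (if $xs\neq 0$ for some $s\in\soc(e_{i}\Lambda)$, then $a\mapsto xa$ is nonzero on the simple essential socle of the injective module $e_{i}\Lambda$, hence injective, hence a split monomorphism, forcing a projective summand of $X$). Consequently $G$ is the \emph{identity} on indecomposable non-projectives and truncates projectives, $e_{i}\Lambda\mapsto e_{i}\bLambda$; the real content of the statement is then (a) $P/\soc P$ is $\tau$-rigid over $\bLambda$ (being projective) but never over $\Lambda$ — witnessed by the nonzero map $P/\soc P\twoheadrightarrow\top P\cong\soc P\hookrightarrow\rad P=\tau_{\Lambda}(P/\soc P)$; this is exactly the ``Note'' the paper records after the proposition — and (b) an indecomposable module that is non-projective over both $\Lambda$ and $\bLambda$ is $\tau_{\Lambda}$-rigid iff it is $\tau_{\bLambda}$-rigid (for instance because the almost split sequence over $\Lambda$ ending at such a module has all terms annihilated by $\soc\Lambda$, so it is almost split over $\bLambda$, whence $\tau_{\bLambda}X=\tau_{\Lambda}X$ and the two conditions $\Hom(X,\tau X)=0$ literally coincide). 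Because you never invoke the socle fact, you cannot show $GX$ is indecomposable, you never address injectivity of $X\mapsto GX$ at all, and you have no access to (a), which is what rules out a non-projective $\tau$-rigid $\Lambda$-module colliding with some $e_{i}\bLambda$ under $G$. All three are needed for a bijection between sets of indecomposables.

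The step that would actually fail is your reverse transfer. Given $f\colon P_{1}\to Y$, your diagram argument only produces $h$ with $G(f-hs)=0$, i.e.\ $f-hs$ lands in the semisimple module $Y\soc\Lambda$, and you must still factor an arbitrary map $P_{1}\to Y\soc\Lambda$ through $s$. This residue cannot be removed formally: your own observation that a non-rigid simple becomes projective under $G$ shows $\tau$-rigidity does not lift along $G$ in general, and the naive repair dies because $\add P_{0}\cap\add P_{1}=0$ forces $\Hom_{\Lambda}(P_{0},S)=0$ for every simple summand $S$ of $\top P_{1}$, so such maps cannot be extended from $\Im s$ to $P_{0}$ landing in $Y\soc\Lambda$. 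What kills the residue is precisely the statement $\soc P_{0}\subseteq\Im s$, equivalently that $\ker(P_{0}\to Y)$ has no simple direct summand lying in $\soc P_{0}$; proving this needs injectivity of the projectives $P'\mid P_{0}$ together with indecomposability of the target (if $\soc P'$ split off the syzygy, the cokernel would decompose or be $P'/\soc P'$) — input your sketch never develops. Your proposed fix for well-definedness of $Y=\Cok s$ is, as you half-admit, circular: you would use the $\tau$-rigidity of $Y$ to legitimize the construction of $Y$. By contrast, your forward direction is essentially sound, and the ``hard part'' you flag there is avoidable: surjectivity of $(Gp,GX)$ for a possibly non-minimal presentation already implies it for the minimal one, since the minimal presentation splits off as a direct summand of any presentation.
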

Note that, for every indecomposable projective $\Lambda$-module $P$, 
the module $P/\soc P$ is a $\tau$-rigid $\bLambda$-module but not a $\tau$-rigid $\Lambda$-module. 

%%%%%%%%%%%%%%%%%%%%%%%%%%%%%%%%%%%%%%%%%%%%%%%%%%%%%%%%%%%%%%%%%%%%%%%%%%%%%%%%%%%%%%%%%%%%%%%%%%%%%%%%%%%%%%%%%%%%%%%%%%%%%%%%%%%%%%%%%%%%%%%%%%%%%%%%%%%
First, we give a classification of indecomposable $\tau$-rigid modules over a multiplicity-free Brauer line algebra.
This is a special case of results in \cite{AZ} and \cite{AAC}.
We denote by $\ind \Lambda$  the set of isomorphism classes of indecomposable $\Lambda$-modules.
\begin{corollary}
Let $Q$ be the following quiver:
\begin{align}
\xymatrix{
1\ar@<0.6mm>[r]^{\alpha_{1}}&2\ar@<0.6mm>[l]^{\beta_{1}}\ar@<0.6mm>[r]^{\alpha_{2}}&3\ar@<0.6mm>[l]^{\beta_{2}}\ar@<0.6mm>[r]^{\alpha_{3}}&
\cdots\ar@<0.6mm>[l]^{\beta_{3}}\ar@<0.6mm>[r]^{\alpha_{n-2}}&n-1\ar@<0.6mm>[l]^{\beta_{n-2}}\ar@<0.6mm>[r]^{\alpha_{n-1}}&n\ar@<0.6mm>[l]^{\beta_{n-1}}}\notag
\end{align}
\begin{itemize}
\item[(1)] Let $\Lambda$ be an algebra with radical square zero whose quiver is $Q$.
Then $\Lambda$ is a representation-finite algebra with 
\begin{align}
\trigid\Lambda=\ind \Lambda . \notag 
\end{align}
\item[(2)] Let $\Gamma$ be a multiplicity-free Brauer line algebra, that is, $\Gamma\simeq KQ/I$, where 
\begin{align}
I=\langle \alpha_{1}\beta_{1}\alpha_{1}, \beta_{n-1}\alpha_{n-1}\beta_{n-1}, 
\alpha_{i}\alpha_{i+1}, \beta_{i+1}\beta_{i}, \beta_{i}\alpha_{i}-\alpha_{i+1}\beta_{i+1}\mid i=1,2,\cdots,n-2\rangle. \notag
\end{align}
Then we have 
\begin{align}
\trigid \Gamma =\ind \Gamma \setminus \{ e_{i}\Gamma/\soc (e_{i}\Gamma) \mid i\in Q_{\ver}\}. \notag
\end{align}
\item[(3)] The cardinalities of $\trigid\Lambda$ and $\trigid\Gamma$ are
\begin{align}
|\trigid\Lambda|=|\trigid\Gamma|=n^{2}.\notag
\end{align}
\end{itemize}
\end{corollary}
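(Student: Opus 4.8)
The plan is to treat the three parts in turn, using the separated quiver for (1) and the socle quotient together with Proposition \ref{4.2} for (2), with (3) falling out of the cardinality bookkeeping.

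\emph{Part (1).} First I would compute the separated quiver $Q^{\s}$. Writing out the arrows $i^{+}\to j^{-}$ coming from $\alpha_{i}\colon i\to i+1$ and $\beta_{i}\colon i+1\to i$, one finds that $Q^{\s}$ is the disjoint union of two connected components
\[
C_{1}=\{i^{+}\mid i\ \mathrm{odd}\}\cup\{i^{-}\mid i\ \mathrm{even}\},\qquad
C_{2}=\{i^{+}\mid i\ \mathrm{even}\}\cup\{i^{-}\mid i\ \mathrm{odd}\},
\]
each of which is a linear $A_{n}$ quiver (the path $1^{+}-2^{-}-3^{+}-\cdots$ for $C_{1}$ and $1^{-}-2^{+}-3^{-}-\cdots$ for $C_{2}$). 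In particular $Q^{\s}$ is a disjoint union of Dynkin quivers, so $\Lambda$ is representation-finite by Proposition \ref{2.7}(3). The key point is that for every $i$ the two vertices $i^{+}$ and $i^{-}$ lie in \emph{different} components. Hence if $X$ is an indecomposable nonprojective $\Lambda$-module, then $FX$ is supported on a single component $C_{k}$, and by Lemma \ref{3.4} the vertices of $Q^{X}$ consist of the $+$-vertices of $C_{k}$ occurring in $P_{0}^{X}$ together with the $-$-vertices of $C_{k}$ occurring in $P_{1}^{X}$; as the $+$-vertices and $-$-vertices of $C_{k}$ have indices of opposite parity, no index $i$ can contribute both $i^{+}$ and $i^{-}$. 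Thus $Q^{X}$ is a single subquiver, i.e.\ $\add P_{0}^{X}\cap\add P_{1}^{X}=0$, and by the characterization of $\tau$-rigid modules over representation-finite radical-square-zero algebras established just above, $X$ is $\tau$-rigid. Together with the projectives this gives $\trigid\Lambda=\ind\Lambda$.

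\emph{Part (2).} Here I would first record that $\Gamma$, being a Brauer line algebra on a connected graph, is a ring-indecomposable non-semisimple symmetric algebra, so Proposition \ref{4.2} applies. Next I would identify $\overline{\Gamma}:=\Gamma/\soc\Gamma$ with $\Lambda$: for $n\ge 2$ each indecomposable projective $e_{i}\Gamma$ has Loewy length $3$ with simple socle, so $\soc\Gamma=\rad^{2}\Gamma$, and because every generator of the defining ideal $I$ is a path of length $\ge 2$ we obtain $\overline{\Gamma}=\Gamma/\rad^{2}\Gamma=KQ/R^{2}\cong\Lambda$, where $R$ denotes the arrow ideal (the case $n=1$, where $\Gamma=K[x]/(x^{2})$, is immediate). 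Proposition \ref{4.2} then yields a bijection $\trigid\Gamma\to\trigid\overline{\Gamma}=\trigid\Lambda=\ind\Lambda$, so $|\trigid\Gamma|=n^{2}$ by Part (1). Finally, the remark after Proposition \ref{4.2} shows that each of the $n$ modules $e_{i}\Gamma/\soc(e_{i}\Gamma)=e_{i}\overline{\Gamma}$ is \emph{not} $\tau$-rigid over $\Gamma$, whence $\trigid\Gamma\subseteq\ind\Gamma\setminus\{e_{i}\Gamma/\soc(e_{i}\Gamma)\mid i\in Q_{\ver}\}$.

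\emph{Parts (2) and (3), conclusion.} It remains to promote this inclusion to an equality, for which it suffices to count: I claim $|\ind\Gamma|=n^{2}+n$, so that the right-hand side above has exactly $n^{2}$ elements and an inclusion of two finite sets of equal cardinality must be an equality. This count is the main obstacle. I would establish it by proving $\ind\Gamma=\ind\overline{\Gamma}\sqcup\{e_{i}\Gamma\mid i\in Q_{\ver}\}$, that is, that every indecomposable \emph{nonprojective} $\Gamma$-module is annihilated by $\soc\Gamma$ and is therefore an $\overline{\Gamma}$-module; combined with $|\ind\overline{\Gamma}|=|\ind\Lambda|=n^{2}$ and the $n$ indecomposable projectives $e_{i}\Gamma$ this gives the total. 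Here one genuinely uses the representation theory of the multiplicity-free Brauer line algebra---equivalently, that it is a representation-finite special biserial algebra whose nonprojective indecomposables are exactly its string modules, none of which detects the socle of $\Gamma$---and this socle-annihilation statement is where I expect the real work to lie. Granting it, Parts (2) and (3) follow at once, the common value being $|\trigid\Lambda|=|\trigid\Gamma|=n^{2}$.
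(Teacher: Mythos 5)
Your part (1) is correct and is essentially the paper's argument: the two components of $Q^{\s}$ are alternating $A_{n}$ quivers, $FX$ lives on a single component, Lemma \ref{3.4} identifies the projectives in the minimal resolution of $FX$ with the $+$- and $-$-vertices of $Q^{X}$, and the parity observation gives $\add P_{0}^{X}\cap\add P_{1}^{X}=0$, so Theorem \ref{3.2} (equivalently, the corollary preceding this one) applies. Your reduction of (2) to the isomorphism $\overline{\Gamma}\simeq\Lambda$ via Proposition \ref{4.2} and the remark following it is also the paper's route.

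The genuine gap is that the number $n^{2}$ is never computed. Your proof of (2) runs ``inclusion plus equal finite cardinalities'', and both cardinalities you need --- $|\trigid\Gamma|=n^{2}$ and $|\ind\Gamma|=n^{2}+n$ --- are derived from the claim $|\ind\Lambda|=n^{2}$, which you assert (``by Part (1)'', ``combined with $|\ind\overline{\Gamma}|=|\ind\Lambda|=n^{2}$'') but nowhere prove: part (1) only gives the \emph{equality} $\trigid\Lambda=\ind\Lambda$, not its cardinality. Since $|\trigid\Lambda|=n^{2}$ is precisely assertion (3) for $\Lambda$, your treatment of (3) is circular, and without it your equality in (2) does not close. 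The paper supplies this count via Corollary \ref{3.5}: every connected single subquiver of $Q^{\s}$ is of type $A$, the set $\S^{+}$ contains exactly $n$ such quivers with one vertex and exactly $2(n-l+1)$ with $l\geq 2$ vertices, and each satisfies $|\trigid^{\m}KQ^{\prime}|=1$, giving $n+2\sum_{l=2}^{n}(n-l+1)=n^{2}$. (Alternatively one can count directly: the stable equivalence $F$ yields $|\ind\Lambda|=|\ind K Q^{\s}|-2n+n=n(n+1)-n=n^{2}$.) By contrast, the socle-annihilation statement you flag as ``the real work'' is not specific to Brauer line algebras at all: over any self-injective algebra, if $M\cdot\soc\Gamma\neq 0$ pick $m$ with $m\cdot\soc(e_{i}\Gamma)\neq 0$; the map $e_{i}\Gamma\to M$, $e_{i}a\mapsto me_{i}a$, is injective because it is injective on the simple essential socle of $e_{i}\Gamma$, and injectivity of the module $e_{i}\Gamma$ splits this embedding, forcing $M$ to have a projective summand. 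So that step is routine; the missing piece is the count.
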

\begin{proof}
(1) Since the underlying graph of the separated quiver $Q^{\s}$ is a disjoint union of the following two Dynkin graphs of type $A$
\begin{align}
\xymatrix@C=8mm{
{\begin{smallmatrix}{1^{+}}\end{smallmatrix}}\ar@{-}[r]&{\begin{smallmatrix}{2^{-}}\end{smallmatrix}}&\ar@{-}[l]\ar@{-}[r]\cdots
&{\begin{smallmatrix}{n-1^{-\epsilon}}\end{smallmatrix}}\ar@{-}[r]&{\begin{smallmatrix}{n^{\epsilon}}\end{smallmatrix}}}\ \ 
\xymatrix@C=8mm{
{\begin{smallmatrix}{1^{-}}\end{smallmatrix}}&{\begin{smallmatrix}{2^{+}}\end{smallmatrix}}\ar@{-}[l]\ar@{-}[r]&\cdots\ar@{-}[r]
&{\begin{smallmatrix}{n-1^{\epsilon}}\end{smallmatrix}}\ar@{-}[r]&{\begin{smallmatrix}{n^{-\epsilon}}\end{smallmatrix}}}\notag
\end{align}
where $\epsilon=-$ if $n$ is even and $\epsilon=+$ if $n$ is odd, 
$\Lambda$ is representation-finite by Proposition \ref{2.7}.
Moreover, for any indecomposable $\Lambda$-module $X$, 
the $\Ma$-module $FX$ is rigid by Proposition \ref{2.5}(1), or equivalently $\tau$-rigid.
Moreover, we have $\add P_{0}^{X}\cap \add P_{1}^{X}=0$ by Lemma \ref{3.4}.
Hence, every indecomposable $\Lambda$-module is always $\tau$-rigid by Theorem \ref{3.2}.

(2) Since $\Gamma$ is a symmetric algebra, there is a bijection
\begin{align}
\trigid\Gamma \longrightarrow \trigid\overline{\Gamma} \notag 
\end{align}
by Proposition \ref{4.2}.
Since $\overline{\Gamma}$ is isomorphic to $\Lambda$, the assertion follows from (1).

(3) By Corollary \ref{3.5}, we have
\begin{align}
|\trigid\Lambda|=|\trigid\Gamma|=\sum_{Q^{\prime}\in\S^{+}}|\trigid^{\m}KQ^{\prime}|. \notag
\end{align}
By (1), all single subquiver in $\S^{+}$ with $l>1$ (respectively, $l=1$) vertices are exactly $2(n-l+1)$ (respectively, $n$) Dynkin quivers of type $A$. 
Since, for each Dynkin quiver $Q^{\prime}$ of type $A$, we have $|\trigid^{\m} KQ^{\prime}|=1$, the cardinalities are 
\begin{align}
\displaystyle|\trigid\Lambda|=|\trigid\Gamma|=n+2\sum_{l=2}^{n}(n-l+1)=n^{2}. \notag
\end{align}
\end{proof}
%%%%%%%%%%%%%%%%%%%%%%%%%%%%%%%%%%%%%%%%%%%%%%%%%%%%%%%%%%%%%%%%%%%%%%%%%%%%%%%%%%%%%%%%%%%%%%%%%%%%%%%%%%%%%%%%%%%%%%%%%%%%%%%%%%%%%%%%%%%%%%%%%%%%%%%%%%%
Finally, we give an example of $\tau$-rigid-finite algebras which is not representation-finite.
\begin{corollary}\label{4.3}
Let $Q$ be the following quiver:
\begin{align}
\xymatrix@C=10mm@R=5mm{
&\begin{smallmatrix}2\end{smallmatrix}\ar@<1mm>[ld]^{\beta_{1}}\ar@<1mm>[r]^{\alpha_{2}}
&\begin{smallmatrix}3\end{smallmatrix}\ar@<1mm>[l]^{\beta_{2}}\ar@<1mm>[rd]^{\alpha_{3}}&\\
\begin{smallmatrix}1\end{smallmatrix}\ar@<1mm>[ru]^{\alpha_{1}}\ar@<1mm>[rd]^{\beta_{n}}&
&&\vdots \ar@<1mm>[lu]^{\beta_{3}}\ar@<1mm>[ld]^{\alpha_{n-2}}\\
&\begin{smallmatrix}n\end{smallmatrix}\ar@<1mm>[r]^{\beta_{n-1}}\ar@<1mm>[lu]^{\alpha_{n}}
&\begin{smallmatrix}n-1\end{smallmatrix}\ar@<1mm>[ru]^{\beta_{n-2}}\ar@<1mm>[l]^{\alpha_{n-1}}&
}\notag
\end{align}
\begin{itemize}
\item[(1)] Let $\Lambda$ be an algebra with radical square zero whose quiver is $Q$.
Then the following hold.
\begin{itemize}
\item[(a)] $\Lambda$ is not representation-finite.
\item[(b)] $\Lambda$ is $\tau$-rigid-finite if and only if $n$ is odd.
\end{itemize}
\item[(2)] Let $\Gamma$ be a multiplicity-free Brauer cyclic graph algebra, that is, $\Gamma\simeq KQ/I$, where 
\begin{align}
I=\langle \alpha_{n}\alpha_{1}, \beta_{1}\beta_{n}, \beta_{n}\alpha_{n}-\alpha_{1}\beta_{1}, 
\alpha_{i}\alpha_{i+1}, \beta_{i+1}\beta_{i}, \beta_{i}\alpha_{i}-\alpha_{i+1}\beta_{i+1}  \mid i=1,2,\cdots,n-1\rangle. \notag
\end{align}
Then $\Gamma$ is $\tau$-rigid-finite if and only if $n$ is odd.
\item[(3)] Assume that $n$ is odd. Then the cardinalities of $\trigid\Lambda$ and $\trigid\Gamma$ are
\begin{align}
|\trigid\Lambda|=|\trigid\Gamma|=2n^{2}-n. \notag
\end{align}
\end{itemize}
\end{corollary}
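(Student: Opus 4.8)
The plan is to reduce all three parts to the separated quiver $Q^{\s}$ via Theorem \ref{3.1}, Corollary \ref{3.5} and Proposition \ref{4.2}, after first determining the shape of $Q^{\s}$ explicitly. The arrows of $Q$ are $\alpha_{i}\colon i\to i+1$ and $\beta_{i}\colon i+1\to i$ (indices modulo $n$), so the arrows of $Q^{\s}$ are $i^{+}\to(i+1)^{-}$ (from $\alpha_{i}$) and $(i+1)^{+}\to i^{-}$ (from $\beta_{i}$). First I would trace the walk $1^{+},2^{-},3^{+},4^{-},\ldots$ in which the vertex at position $k$ is $i^{+}$ when $k$ is odd and $i^{-}$ when $k$ is even, where $i\in\{1,\dots,n\}$ is the residue of $k$ modulo $n$. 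This walk closes up after $2n$ steps when $n$ is odd and after $n$ steps when $n$ is even, so $Q^{\s}$ is a single $2n$-cycle for $n$ odd and a disjoint union of two $n$-cycles for $n$ even. The crucial bookkeeping is that, when $n$ is odd, one of $i,\,i+n$ is odd and the other even, so $i^{+}$ and $i^{-}$ occupy the two \emph{antipodal} positions $i$ and $i+n$ of the $2n$-cycle. Since every cycle is an Euclidean quiver of type $\widetilde{A}$ (hence non-Dynkin), Proposition \ref{2.7}(3) immediately gives part (1)(a).

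For (1)(b) I would invoke Theorem \ref{3.1}: $\Lambda$ is $\tau$-rigid-finite if and only if every single subquiver of $Q^{\s}$ is a disjoint union of Dynkin quivers. If $n$ is even, one of the two $n$-cycles uses each index exactly once, so that $n$-cycle is itself a single subquiver which is Euclidean, and $\Lambda$ fails to be $\tau$-rigid-finite. If $n$ is odd, the antipodal property forces every single subquiver to omit at least one vertex of each pair $\{i^{+},i^{-}\}$, hence at least one vertex of the $2n$-cycle; deleting a vertex from a cycle leaves a disjoint union of paths, i.e.\ Dynkin quivers of type $A$. Thus $\Lambda$ is $\tau$-rigid-finite exactly when $n$ is odd.

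Part (2) follows from Proposition \ref{4.2}: $\Gamma$ is a connected non-semisimple symmetric algebra with $\Gamma/\soc\Gamma\cong\Lambda$, so there is a bijection $\trigid\Gamma\to\trigid(\Gamma/\soc\Gamma)$ and the algebras $\Gamma,\Lambda$ are simultaneously $\tau$-rigid-finite; by (1)(b) this happens iff $n$ is odd. For (3), assuming $n$ odd, I would apply Corollary \ref{3.5}, which gives $|\trigid\Lambda|=\sum_{Q'\in\S^{+}}|\trigid^{\m}KQ'|$. Every connected single subquiver of the $2n$-cycle is a contiguous arc, hence a type $A$ Dynkin quiver, for which $|\trigid^{\m}KQ'|=1$ (the type $A$ computation already used for the Brauer line algebra above). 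It then remains to count $\S^{+}$: an arc of length $l$ is single iff it contains no antipodal pair, i.e.\ iff $l\le n$, and there are exactly $2n$ arcs of each length $l$, giving $2n\cdot n=2n^{2}$ connected single subquivers; removing the $n$ one-vertex quivers $\{i^{-}\}$ leaves $|\S^{+}|=2n^{2}-n$. Hence $|\trigid\Lambda|=2n^{2}-n$, and $|\trigid\Gamma|=|\trigid\Lambda|$ by (2).

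The main obstacle is the combinatorial step in the first paragraph: correctly identifying the parity-dependent cycle structure of $Q^{\s}$ and, in particular, pinning down the antipodal positions of $i^{+}$ and $i^{-}$ on the cycle. This single fact is what drives both the odd/even dichotomy governing (1)(b)--(2) and the arc count producing $2n^{2}-n$ in (3), so the argument stands or falls with getting that structure exactly right.
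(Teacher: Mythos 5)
Your proposal is correct and follows essentially the same route as the paper: identify $Q^{\s}$ as two $n$-cycles for $n$ even and a single $2n$-cycle with $i^{+},i^{-}$ antipodal for $n$ odd, then apply Proposition \ref{2.7}(3) for (1)(a), Theorem \ref{3.1} for (1)(b), Proposition \ref{4.2} for (2), and Corollary \ref{3.5} together with the count of type $A$ arcs (giving $2n^{2}-n$ after discarding the one-vertex quivers $\{i^{-}\}$) for (3). Your write-up is in fact slightly more explicit than the paper's on the antipodal bookkeeping and the arc count, but the argument is the same.
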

\begin{proof}
(1) The separated quiver $Q^{\s}$ is one of the following quivers:
\begin{align}
\begin{picture}(300,75)(0,0)
\put(-65,65){$\xymatrix@C=3mm@R=3mm{
&\begin{smallmatrix}2^{-}\end{smallmatrix}&\begin{smallmatrix}3^{+}\end{smallmatrix}\ar[l]\ar[rd]&\\
\begin{smallmatrix}1^{+}\end{smallmatrix}\ar[ru]\ar[rd]&&&\vdots\\
&\begin{smallmatrix}n^{-}\end{smallmatrix}&\begin{smallmatrix}(n-1)^{+}\end{smallmatrix}\ar[ru]\ar[l]&
}$}
\put(50,65){$\xymatrix@C=3mm@R=3mm{
&\begin{smallmatrix}2^{+}\end{smallmatrix}\ar[ld]\ar[r]&\begin{smallmatrix}3^{-}\end{smallmatrix}&\\
\begin{smallmatrix}1^{-}\end{smallmatrix}&&&\vdots \ar[lu]\ar[ld]\\
&\begin{smallmatrix}n^{+}\end{smallmatrix}\ar[r]\ar[lu]&\begin{smallmatrix}(n-1)^{-}\end{smallmatrix}&
}$}
\thicklines
\put(170,0){\line(0,1){72}}
\put(170,63){$\xymatrix@C=1mm@R=3mm{
&\begin{smallmatrix}2^{-}\end{smallmatrix}&\begin{smallmatrix}3^{+}\end{smallmatrix}\ar[l]\ar[r]&\cdots\ar[r]&
\begin{smallmatrix}(n-1)^{-}\end{smallmatrix}&\begin{smallmatrix}n^{+}\end{smallmatrix}\ar[l]\ar[rd]&\\
\begin{smallmatrix}1^{+}\end{smallmatrix}\ar[ru]\ar[rd]&&&&&&\begin{smallmatrix}1^{-}\end{smallmatrix}\\
&\begin{smallmatrix}n^{-}\end{smallmatrix}&\begin{smallmatrix}(n-1)^{+}\end{smallmatrix}\ar[l]\ar[r]&
\cdots\ar[r]&\begin{smallmatrix}3^{-}\end{smallmatrix}&\begin{smallmatrix}2^{+}\end{smallmatrix}\ar[l]\ar[ru]&
}$}
\put(35,0){$n$: even}
\put(250,0){$n$: odd}
\end{picture}\notag
\end{align}
Thus $\Lambda$ is not representation-finite by Proposition \ref{2.7}(3).

If $n$ is odd, then every single subquiver is a disjoint union of Dynkin quivers.
Thus $\Lambda$ is $\tau$-rigid-finite by Theorem \ref{3.1}.
On the other hand, if $n$ is even, then two connected components are non-Dynkin single subquivers.
Thus $\Lambda$ is not $\tau$-rigid-finite by Theorem \ref{3.1}.

(2) Since $\Gamma$ is a symmetric algebra, by Proposition \ref{4.2}, we have only to claim that 
$\overline{\Gamma}$ is $\tau$-rigid-finite if and only if $n$ is odd.
Indeed, since $\overline{\Gamma}$ is isomorphic to $\Lambda$, the claim follows from (1).

(3) By Corollary \ref{3.5}, we have
\begin{align}
|\trigid\Lambda|=|\trigid\Gamma|=\sum_{Q^{\prime}\in\S^{+}}|\trigid^{\m}KQ^{\prime}|. \notag
\end{align}
By (1), all single subquiver in $\S^{+}$ with $l>1$ (respectively, $l=1$) vertices are exactly $2n$ (respectively, $n$) Dynkin quivers of type $A$. 
Since, for each Dynkin quiver $Q^{\prime}$ of type $A$, we have $|\trigid^{\m} KQ^{\prime}|=1$, the cardinalities are  
\begin{align}
\displaystyle|\trigid\Lambda|=|\trigid\Gamma|=n+2n(n-1)=2n^{2}-n. \notag
\end{align}
\end{proof}
%%%%%%%%%%%%%%%%%%%%%%%%%%%%%%%%%%%%%%%%%%%%%%%%%%%%%%%%%%%%%%%%%%%%%%%%%%%%%%%%%%%%%%%%%%%%%%%%%%%%%%%%%%%%%%%%%%%%%%%%%%%%%%%%%%%%%%%%%%%%%%%%%%%%%%%%%%%
%%%%%%%%%%%%%%%%%%%%%%%%%%%%%%%%%%%%%%%%%%%%%%%%%%%%%%%%%%%%%%%%%%%%%%%%%%%%%%%%%%%%%%%%%%%%%%%%%%%%%%%%%%%%%%%%%%%%%%%%%%%%%%%%%%%%%%%%%%%%%%%%%%%%%%%%%%%
%%%%%%%%%%%%%%%%%%%%%%%%%%%%%%%%%%%%%%%%%%%%%%%%%%%%%%%%%%%%%%%%%%%%%%%%%%%%%%%%%%%%%%%%%%%%%%%%%%%%%%%%%%%%%%%%%%%%%%%%%%%%%%%%%%%%%%%%%%%%%%%%%%%%%%%%%%%
%%%%%%%%%%%%%%%%%%%%%%%%%%%%%%%%%%%%%%%%%%%%%%%%%%%%%%%%%%%%%%%%%%%%%%%%%%%%%%%%%%%%%%%%%%%%%%%%%%%%%%%%%%%%%%%%%%%%%%%%%%%%%%%%%%%%%%%%%%%%%%%%%%%%%%%%%%%
%%%%%%%%%%%%%%%%%%%%%%%%%%%%%%%%%%%%%%%%%%%%%%%%%%%%%%%%%%%%%%%%%%%%%%%%%%%%%%%%%%%%%%%%%%%%%%%%%%%%%%%%%%%%%%%%%%%%%%%%%%%%%%%%%%%%%%%%%%%%%%%%%%%%%%%%%%%

\end{document}